\documentclass{article}

\usepackage[final]{neurips_2024}

\usepackage{latexsym}
\usepackage{graphicx,grffile} 
\graphicspath{{./figures/}}
\usepackage{mathrsfs} 
\usepackage{amsmath,amssymb,amsthm}  
\usepackage[american]{babel}
\usepackage[colorlinks,linkcolor=blue,citecolor=blue,urlcolor=blue]{hyperref}
\usepackage{subcaption}
\usepackage{xcolor}
\usepackage[T1]{fontenc}
\usepackage[utf8]{inputenc}
\usepackage{algorithm}
\usepackage{algorithmic}
\usepackage{float}
\usepackage{mathtools}
\usepackage{pdfpages}  
\usepackage{color}
\usepackage[capitalize,noabbrev]{cleveref}
\usepackage{booktabs}

\theoremstyle{plain}
\newtheorem{theorem}{Theorem}[section]
\newtheorem{proposition}[theorem]{Proposition}

\newtheorem{corollary}[theorem]{Corollary}
\theoremstyle{definition}

\theoremstyle{remark}

\usepackage{custom_style}
\usepackage{mathrsfs} 
\usepackage{bm}
\usepackage{enumitem}
\usepackage{wrapfig}

\newcommand{\vectorize}[1]{\mathrm{vec}(#1)}
\newcommand*\samethanks[1][\value{footnote}]{\footnotemark[#1]}

\title{Semidefinite Relaxations of the Gromov-Wasserstein Distance}
\author{%
  Junyu Chen \thanks{Equal contribution.} \qquad Binh T. Nguyen \samethanks[1] \qquad Shang Hui Koh \qquad Yong Sheng Soh \vspace{2mm}\\
  Department of Mathematics\\
  National University of Singapore\\
  \texttt{chenjunyu@u.nus.edu,binhnt@nus.edu.sg,matsys@nus.edu.sg}
}
\begin{document}
\maketitle
\begin{abstract}
The Gromov-Wasserstein (GW) distance is an extension of the optimal transport problem that allows one to match objects between incomparable spaces.  At its core, the GW distance is specified as the solution of a non-convex quadratic program and is not known to be tractable to solve.  In particular, existing solvers for the GW distance are only able to find locally optimal solutions.  In this work, we propose a semi-definite programming (SDP) relaxation of the GW distance. The relaxation can be viewed as the Lagrangian dual of the GW distance augmented with constraints that relate to the linear and quadratic terms of transportation plans. In particular, our relaxation provides a tractable (polynomial-time) algorithm to compute globally optimal transportation plans (in some instances) together with an accompanying proof of global optimality.  Our numerical experiments suggest that the proposed relaxation is strong in that it frequently computes the globally optimal solution.  Our Python implementation is available at \url{https://github.com/tbng/gwsdp}.
\end{abstract}

\section{Introduction}

The optimal transport (OT) problem concerns the task of finding a transportation plan between two probability distributions to minimize some costs. The problem has applications in a wide range of scientific and engineering applications. For instance, in the context of machine learning, the OT problem forms the backbone of recent breakthroughs in generative modeling
\citep{arjovsky17a,liu2022flow,lipman2022flow}, natural
language processing \citep{kusnerb15}, domain adaptation~\citep{courty2017joint}, and single-cell alignment \citep{schiebinger2019optimal,bunne2023learning,bunne2024optimal}.

Let $\alpha \in \Sigma_m$ and $\beta \in \Sigma_m$ be discrete probability distributions over a metric space -- here $\Sigma_m := \{\alpha \in \mathbb{R}^m_+, \sum_{i=1}^m \alpha_i = 1\}$ denotes the probability simplex.
Let $C \in \mathbb{R}^{m \times n}$ be the matrix such that $C_{i,j}$ models the transportation cost between points $x_i \sim \alpha$ and $y_j \sim \beta$.
The (Kantorovich) formulation of the discrete OT problem \citep{kantorovich1942,villani2009optimal,santambrogio2015optimal,peyre2019computational} is defined as the solution of the following convex optimization instance
\begin{equation}
  \label{problem:wasserstein}
  \pi_{\cW} \egaldef \argmin_{\pi \in \Pi(\alpha,\beta)} \inner{C, \pi}.
\end{equation}
Here, $\Pi (\alpha,\beta) = \{\pi \in \mathbb{R}^{m \times n}_{+}:\pi \mathbbm{1}_n = \alpha, \pi^\top \mathbbm{1}_m = \beta\}$ denotes the set of couplings between probability distributions $\alpha, \beta \in \Sigma_m$, while $\mathbbm{1}_m \in \mathbb{R}^m$ denotes the vector of ones.
The OT problem \eqref{problem:wasserstein} is an instance of a linear program (LP), and hence admits a global minimizer.

One limitation of the classical OT formulation in \eqref{problem:wasserstein} is that the definition of the cost matrix $C$ requires the probability distributions $\alpha$ and $\beta$ to reside in the same metric space. This is problematic in application domains where we wish to compare probability distributions in different spaces, in which case there is no meaningful way to describe the cost of moving from one location to another. Such settings arise frequently in shape comparison and graph matching applications, for example.

To address such scenarios, the work of \cite{memoli2011gromovwasserstein} formulates an extension of the OT problem known as the Gromov-Wasserstein distance (GW) whereby one can define an analogous OT problem given knowledge of the cost matrices for the respective spaces where $\alpha$ and $\beta$ reside in.  More concretely, let the tuple $(C, \alpha) \in \mathbb{R}^{m \times m} \times \Sigma_m$ denote a discrete metric-measure space.
Given a smooth, differentiable function $\ell: \mathbb{R}\times\mathbb{R}\to\mathbb{R}$, the Gromov-Wasserstein distance between two discrete metric-measure spaces
$(C, \alpha)$ and $(D, \beta)$ is defined by
\begin{equation}
\label{eq:gw-original}\tag{GW}
\begin{aligned}
\text{GW}(C,D,\alpha,\beta)
\egaldef ~~  \min_{\pi \in \Pi(\alpha,\beta)} \ell(C_{i,k},D_{j,l}) \pi_{i,j} \pi_{k,l} 
= ~~ \min_{\pi \in \Pi(\alpha,\beta)} \langle \bL(C,D) \otimes \pi, \pi \rangle.
\end{aligned}
\end{equation}

Here, the transportation cost is specified by the four-way tensor that measures the discrepancy between the metrics $C$ and $D$
\begin{equation} \label{eq:4way-tensor}
\mathbf{L}(C, D)_{i,j,k,l} \egaldef \ell(C_{i,k},D_{j, l}).
\end{equation}
The squared loss error, for instance, is a common choice. Following  \cite{peyre2016gromov}, we define the tensor-matrix multiplication by
\begin{equation*}
  [\mathbf{L} \otimes \pi]_{i,j} \egaldef \sum_{k,l} \mathbf{L}_{i,j,k,l} \pi_{k,l} .
\end{equation*}
The GW distance has been applied widely to machine learning tasks, most notably on graph learning \citep{titouan19a,xu19b,vincent-cuaz21a,cuaz22template}.
It is an instance of a quadratic program (QP) -- these are optimization instances in which we minimize a quadratic objective subject to some linear inequalities.  To see this, one can re-write the objective in \eqref{eq:gw-original} in terms of vectorized matrices
\begin{equation} \label{eq:gw-qcqp} \tag{GW+}
\min_{\pi} ~~  \langle \mathrm{vec}(\pi) , L \ \mathrm{vec}(\pi) \rangle ~~ \mathrm{s.t.} ~~ \pi \in \Pi (\alpha,\beta).
\end{equation}
Here, $(L_{ij,kl})_{ij, kl} \in \mathbb{R}^{mn\times mn}$ denotes the flattened
2-dimension tensor of $\bL$, while the vectorization of a matrix $\pi \in
\mathbb{R}^{m\times n}$ is given by
\begin{equation*}
    \vectorize{\pi} \egaldef [\pi_{11}, \pi_{21}, \dots, \pi_{m1}, \dots, \pi_{mn}]^{\top} \in \mathbb{R}^{mn}.
\end{equation*}
The constraint $\pi \in \Pi (\alpha,\beta) $ is convex, and in fact linear.  On the other hand, the matrix $L$ need not be positive semidefinite, and as such, the QP instance in \eqref{eq:gw-qcqp} is non-convex in general.  In fact, in the cases where the matrix $L$ arises as the difference of cost matrices \eqref{eq:4way-tensor}, $L$ is {\em never} positive semidefinite as these are zero on the diagonal while the off-diagonal entries are non-negative.    

\section{Main Contributions}

The main contribution of this work is to propose a strong semidefinite programming (SDP)-based relaxation for the Gromov-Wasserstein distance that leads to globally optimal solutions in many instances.  Concretely, let $(\pi_{sdp},P_{sdp})$ denote an optimal solution to the following
\begin{equation}
  \label{eq:gw-sdp-extra} \tag{GW-SDP}
  \begin{aligned}
    \text{GW-SDP}(C,D,\alpha,\beta) 
  \quad \egaldef \quad & \min_{ \substack{\pi \in \bbR^{m \times n }, \\ P \in \bbR^{mn \times mn} }
  } 
  \quad \langle L, P \rangle \\
  \text{s.t.} \quad & \begin{pmatrix}
    P & \text{vec}(\pi)\\
    \text{vec}(\pi)^\top & 1
    \end{pmatrix} \succeq 0 \\
    & \pi \in \Pi(\alpha,\beta) \\
    & P \mathrm{vec}(e_i \mathbbm{1}_n^\top) = \alpha_i \text{vec}(\pi), i \in [m] \\
    & P \mathrm{vec}( \mathbbm{1}_m e_j^\top) = \beta_j \text{vec}(\pi), j \in [n] \\
    & P \geq 0
\end{aligned}
\end{equation}
Here, $e_i$ denotes the standard basis vector whose $i$-th entry is $1$.  This relaxation can be viewed as the Lagrangian dual of the GW problem augmented with constraints that relate the linear and quadratic terms of transportation plans (we discuss these aspects in greater detail in Appendix \ref{sec:duality}).  A simpler way to express the condition $P \mathrm{vec}(e_i \mathbbm{1}_n^\top) = \alpha_i \text{vec}(\pi)$ is to note that
\begin{equation*}
\begin{aligned}
P \mathrm{vec}(e_i \mathbbm{1}_n^\top) = \alpha_i \text{vec}(\pi) \quad & \Leftrightarrow \quad \textstyle \sum_j P_{(i,j),(k,l)} = \alpha_i \pi_{k,l}, \\
P \mathrm{vec}( \mathbbm{1}_m e_j^\top) = \beta_j \text{vec}(\pi) \quad & \Leftrightarrow \quad \textstyle \sum_i P_{(i,j),(k,l)} = \beta_j \pi_{k,l}. 
\end{aligned}
\end{equation*}

We begin by noting a basic observation: Let $\pi \in \Pi(\alpha,\beta)$ be a transportation plan.  Then the tuple $(\pi,P) = (\pi,\text{vec}(\pi)\text{vec}(\pi)^\top)$ is a feasible solution to \eqref{eq:gw-sdp-extra} since $\mathrm{vec}(\pi)\mathrm{vec}(\pi)^\top \mathrm{vec}(e_i \mathbbm{1}_m^\top) = \mathrm{vec}(\pi) \langle \pi, e_i \mathbbm{1}_m^\top \rangle = \mathrm{vec}(\pi) \langle \pi \mathbbm{1}_n ,  e_i \rangle = \alpha_i \mathrm{vec}(\pi)$. The inequalities for $\beta$ follow analogously. This implies that the optimal value of \eqref{eq:gw-sdp-extra} is a lower bound to the GW problem \eqref{eq:gw-original}:   Let $\pi^\star$ denote an optimal solution to the GW problem (note: this is \eqref{eq:gw-original}, which is equivalent to \eqref{eq:gw-qcqp}).  By recalling that the tuple $(\text{vec}(\pi^\star),\text{vec}(\pi^\star)\text{vec}(\pi^\star)^\top)$ is a feasible solution to \eqref{eq:gw-sdp-extra}, one has
\begin{equation} \label{eq:simple_feas_ineq}
\langle P_{sdp} , L \rangle \leq \langle \mathrm{vec}(\pi^\star), L \ \mathrm{vec}(\pi^\star) \rangle = \langle \pi^\star, \bL \otimes \pi^\star \rangle.  
\end{equation}
 
The inequality \eqref{eq:simple_feas_ineq} provides us with a principled way of {\em certifying} global optimality of a given transportation plan.  Let $\pi \in \Pi(\alpha,\beta)$ be an arbitrary transportation plan.  A natural approach to quantify the quality of $\pi$ is to compare its objective value with the optimal choice:
\begin{equation*}
\text{Apx. Ratio} (\pi) ~ := ~ \frac{\langle \pi, \bL \otimes \pi \rangle}{\langle \pi^\star, \bL \otimes \pi^\star \rangle} .
\end{equation*}
This ratio is at least one and is equal to one if $\pi$ is also globally optimal. A consequence of \eqref{eq:simple_feas_ineq} is the following upper bound
\begin{equation}
  \label{eq:estimation-gap}
    \text{Apx. Ratio} (\pi_{sdp}) \leq \frac{\langle \pi_{sdp}, \bL \otimes \pi_{sdp} \rangle}{\langle P_{sdp} , L \rangle}.
\end{equation}
Note that all the quantities in the RHS can be computed efficiently as the solution of a SDP.  Suppose we are able to do so, and in the process evaluate the RHS to be equal to one.  \emph{Then, we have a proof that $\pi_{sdp}$ is the global optimal solution to the GW problem}.  In a recent work that appeared during the reviewing process of our work, the GW problem is shown to be intractable in general \citep{Kravtsova:24}.  What our discussion shows is that it is possible, in some instances, to obtain the globally optimal solution via a polynomial-time algorithm by solving \eqref{eq:gw-sdp-extra}, and with a guarantee that the obtained solution is indeed globally optimal. In fact, the instances for which one can obtain an upper bound equal to one using \eqref{eq:gw-sdp-extra} is not as far-fetched as one might think: our numerical experiments in Section \ref{sec:experiments} show that this happens quite often, and especially so whenever $m=n$.
{\bf No restrictions on cost tensor.}  One of the strengths of our proposed SDP relaxation is that it is valid for {\em all} cost tensors $\bL$. This stands in contrast with other methods like the entropic GW \citep{peyre2016gromov}, which is only applicable to the cost that can be decomposed to a specific form such as the $\ell_2$ or discrete KL loss.

\section{SDP Relaxations of QPs} \label{sec:sdprelaxation}

We motivate the relaxation in \eqref{eq:gw-sdp-extra}.  The starting point is to recognize that the GW problem is an instance of a QPs -- these are optimization instances of the form
\begin{equation}\label{eq:qp}
\min_{x \in \bbR^n} \qquad x^\top A x + 2 b^\top x + c \qquad \text{s.t.} \qquad B x \leq d.
\end{equation}

QPs are an important class of optimization problems.  If the matrix $A$ is PSD, then the objective is convex, and the QP instance can be solved tractably using standard software \citep{NW:06}.  The problem becomes difficult if $A$ contains negative eigenvalues.  The general class of QPs is NP-hard; for instance, it contains the problem of finding the maximum clique of a graph \citep{MS:65}.  In fact, the presence of a {\em single} negative eigenvalue in $A$ is sufficient to make the class of QPs NP-hard  \citep{pardalos1991quadratic}.  The typical approach to solving a quadratic program exactly is via a branch-and-bound type of algorithm.  Other approaches include relating QPs to the class of co-positive programming, mixed integer linear programming, and deploying SDP relaxations \citep{BK:02} -- typically, these methods are used as a sub-routine within a branch-and-bound procedure.



\textbf{Standard SDP Relaxation.} 
The first step of SDP relaxation is to express the quadratic terms with a PSD matrix whose rank is one.  Concretely, the QP instance \eqref{eq:qp} is equivalent to the following:
\begin{equation*}
\begin{aligned}
    \min_{x \in \bbR^n, X \in \bbR^{n\times n}} \quad & \quad \tr{A X} + 2 b^\top x + c \\
    \text{s.t.} \quad\quad\quad & \quad Bx \leq d \\
    \quad & \quad \begin{pmatrix} X &  x \\  x^\top & 1 \end{pmatrix} \succeq 0, ~~ \mathrm{rank} \begin{pmatrix} X &  x \\  x^\top & 1 \end{pmatrix} = 1
\end{aligned}.
\end{equation*}


This optimization instance is not convex because of the rank-one constraint. The second step is to simply omit the rank constraint, which yields a semidefinite program and therefore is convex. This is the standard SDP relaxation for QPs (the technique applies more generally to quadratically constrained quadratic programs -- QCQPs). 


By applying the same sequence of steps to \eqref{eq:gw-qcqp}, the standard SDP relaxation one arrives at is the following:
\begin{equation}\label{eq:gw-sdp_standard}
\begin{aligned}
\min_{\pi \in \bbR^{m \times n}, P \in \bbR^{mn \times mn}} \quad & \quad\langle L, P \rangle \\
\text{s.t.} \quad & \quad \begin{pmatrix} P &  \text{vec}(\pi) \\  \text{vec}(\pi)^\top & 1 \end{pmatrix} \succeq 0\\
& \quad \pi \in \Pi (\alpha,\beta)
\end{aligned}
\end{equation}
%
Problem \eqref{eq:gw-sdp_standard} is a tractable convex semidefinite programming, which can be efficiently solved in polynomial time.  If the solution to \eqref{eq:gw-sdp_standard} (and the subsequent SDP relaxations we introduce) has a rank equal to one, we would have solved the original GW problem \eqref{eq:gw-original}.  Unfortunately, the feasible region of $P$ in \eqref{eq:gw-sdp_standard} is not compact, and the optimal value to \eqref{eq:gw-sdp_standard} is unbounded below in general.

\begin{proposition}\label{prop:unbounded}
The optimization instance \eqref{eq:gw-sdp_standard} is unbounded below. 
\end{proposition}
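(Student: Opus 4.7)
The plan is to exhibit an explicit one-parameter family of feasible points along which the objective tends to $-\infty$. The idea is to fix any feasible transportation plan $\pi$, observe via a Schur complement that the $P$-feasible set is then an unbounded cone, and then push $P$ along a rank-one direction $v v^\top$ where $v$ is a negative-eigenvalue direction of $L$.

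First, I would unpack the LMI constraint via a Schur complement: the block matrix $\begin{pmatrix} P & \text{vec}(\pi) \\ \text{vec}(\pi)^\top & 1 \end{pmatrix} \succeq 0$ is equivalent to $P \succeq \text{vec}(\pi)\text{vec}(\pi)^\top$. So for any feasible plan (for instance $\pi = \alpha \beta^\top$, which lies in $\Pi(\alpha,\beta)$), the feasible set of $P$ contains the whole cone $\{\text{vec}(\pi)\text{vec}(\pi)^\top + Q : Q \succeq 0\}$. Second, I would pin down a descent direction of the form $Q = v v^\top$. Since $\langle L, P\rangle$ depends only on the symmetric part of $L$, I may assume $L$ is symmetric. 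The structural observation already recorded in the paper after \eqref{eq:gw-qcqp} is that $L$ has vanishing diagonal and nonnegative off-diagonal entries, whence $\operatorname{tr}(L) = 0$ while $L \neq 0$ in any nontrivial instance; this forces at least one strictly negative eigenvalue $\lambda < 0$. Let $v$ be an associated unit eigenvector, so that $v^\top L v = \lambda < 0$.

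Combining these two observations, I would set $P_t := \text{vec}(\pi)\text{vec}(\pi)^\top + t\, v v^\top$ for $t \geq 0$. By the Schur complement computation, $(\pi, P_t)$ is feasible in \eqref{eq:gw-sdp_standard} for every $t \geq 0$, and a direct expansion gives $\langle L, P_t \rangle = \langle \text{vec}(\pi), L\,\text{vec}(\pi)\rangle + t\lambda$, which diverges to $-\infty$ as $t \to \infty$. The only genuinely non-routine ingredient is the existence of a strictly negative eigenvalue of $L$; but this is precisely the structural fact already highlighted in the introduction to motivate the relaxation, so beyond invoking it the whole argument is a short Schur-complement manipulation.
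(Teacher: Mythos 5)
Your proof is correct and follows essentially the same strategy as the paper's: fix any feasible $\pi$, note via the Schur complement that $P = \mathrm{vec}(\pi)\mathrm{vec}(\pi)^\top + t\,vv^\top$ remains feasible for all $t \geq 0$, and pick $v$ so that $v^\top L v < 0$. The only difference is the construction of the descent direction $v$: the paper picks two indices $s,t$ with $L_{st} > 0$ and sets $v = e_s - e_t$, then uses the zero-diagonal property to compute $v^\top L v = L_{ss} - 2L_{st} + L_{tt} = -2L_{st} < 0$ directly, whereas you argue abstractly that $\operatorname{tr}(L) = 0$ with $L$ symmetric and nonzero forces a negative eigenvalue and take $v$ to be that eigenvector. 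Both arguments use the same underlying structural fact ($L$ vanishes on the diagonal), so the difference is cosmetic; the paper's choice is a bit more elementary since it avoids invoking spectral decomposition, while yours makes the role of the negative eigenvalue explicit and connects cleanly to the motivating remark after \eqref{eq:gw-qcqp}.
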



\textbf{Tightening the Relaxation.}  As such, it is necessary to augment \eqref{eq:gw-sdp_standard} with additional constraints to further strengthen the relaxation.  Recall that the relaxation \eqref{eq:gw-sdp_standard} is exact if $P = \mathrm{vec}(\pi) \mathrm{vec}(\pi)^\top$.  Therefore, a simple way to improve the relaxation is to add any linear constraints that is satisfied by solutions of the form $(\pi,P) = (\pi,\mathrm{vec}(\pi) \mathrm{vec}(\pi)^\top)$.

First, $\pi \geq 0$ for all $\pi \in \Pi(\alpha,\beta)$, and hence $\mathrm{vec}(\pi) \mathrm{vec}(\pi)^\top \geq 0$.  This means we may freely impose 
\begin{equation}\label{eq:gw-nonneg} \tag{Nng.}
    P \ge 0.
\end{equation}

Second, note that $\sum_i \pi_{ij} \pi_{kl} = \pi_{kl} (\sum_i \pi_{ij}) = \pi_{kl} \beta_j$.  Subsequently, we may impose $\sum_i P_{(i,j),(k,l)} = \beta_j \pi_{kl}$.  This leads to the following set of equalities:
\begin{equation} \label{eq:gw-margi} \tag{Mar.}
P \mathrm{vec}(e_i \mathbbm{1}_n^\top) = \alpha_i \text{vec}(\pi), i \in [m], \qquad P \mathrm{vec}( \mathbbm{1}_m e_j^\top) = \beta_j \text{vec}(\pi), j \in [n].
\end{equation}
The proposed SDP relaxation \eqref{eq:gw-sdp-extra} is precisely \eqref{eq:gw-sdp_standard} with the additional constraints \eqref{eq:gw-nonneg} and \eqref{eq:gw-margi}.  In addition, the set of matrices $P$ satisfying \eqref{eq:gw-margi} have trace at most one.  Hence the feasible region is a subset of PSD matrices with trace at most one, which is compact.

\textbf{Relation to the QAP.}  We point out that the constraints \eqref{eq:gw-nonneg} and \eqref{eq:gw-margi} have been previously proposed for a different but closely related problem known as Quadratic Assignment Problem (QAP) \citep{DML:17,kezurer2015tight,ZKRW:98}.  Mathematically, the QAP problem can be viewed as equivalent to \eqref{eq:gw-qcqp} but with the additional restriction that $m=n$ and that $\pi$ is a {\em permutation} matrix.  The work in \cite{ZKRW:98} proposes a SDP relaxation that is effectively equivalent to \eqref{eq:gw-sdp-extra} but with additional linear equalities implied by orthogonality $\pi\pi^\top = \pi^\top \pi = I$.  The works in \cite{DML:17,kezurer2015tight} subsequently build on the ideas in \cite{ZKRW:98} and propose more scalable alternatives while providing tight relaxations.

The key difference between the QAP and the GW problem we investigate is that $\pi$ is not necessarily a permutation matrix in the GW problem, and necessarily so if $m \neq n$.  As such, the relaxation in \cite{ZKRW:98} is invalid.  Our contribution is to recognize that, by omitting the constraints corresponding to orthogonality, one obtains an SDP relaxation that now becomes valid for the GW problem, which leads to good practical performance.

\textbf{No need for rounding.}  One important property of the relaxation in \eqref{eq:gw-sdp-extra} is that the output will always be a feasible transportation plan in $\Pi(\alpha,\beta)$.  This means that no additional rounding is necessary.  This is vastly different from combinatorial optimization problems including the QAP where the optimal solution to the relaxation is not guaranteed to be a feasible solution, and additional rounding steps may be necessary.


\section{Numerical Experiments with Off-the-shelf Convex Solvers} \label{sec:experiments}

In this section, we implement our proposed SDP relaxation using an off-the-shelf solver.  We compare our method with the Conditional Gradient (CG-GW) solver for finding local solutions \citep{titouan19a}, and the Sinkhorn projections solver for computing solutions to the entropic GW (eGW) problem by \cite{peyre2016gromov}. Both of the latter are implemented in the Python Optimal Transport library (PythonOT, \citealt{flamary2021pot}). The goal is to show that our proposed SDP relaxation frequently computes the global optimal transportation plan whereas existing methods frequently do not.

In what follows, we will use the 2-Gromov-Wasserstein distance, \ie the cost function is squared Euclidean norm.  We solve the GW-SDP instance implemented in CVXPY \citep{diamond2016cvxpy} using
the SCS and MOSEK solvers \citep{mosek,ocpb:16}.

\subsection{Matching Gaussian Distributions}
\label{ssec:matching-gaussian}
%
%
In this example, we estimate the GW distance between two Gaussian point clouds, one in $\mathbb{R}^2$, and the other in $\mathbb{R}^3$.
A visualization of this dataset can be found in \Cref{fig:gaussians-a}.
The classical optimal transport formulation such as the likes of
Wasserstein-2 distance does not apply because the two point clouds belong to different spaces.

As seen in a qualitative demonstration of \Cref{fig:gaussians-b}, our algorithm returns optimal transport plans that are as sparse as the transportation plans obtained via the Conditional Gradient descent solver of Python OT for GW distance (CG-GW).
%
%
We also vary the number of sample points and calculate the value of the
objective function $\inner{\pi, \bL \otimes \pi}$.
As shown in \Cref{fig:varying-n}, the transport plans obtained by \eqref{eq:gw-sdp-extra} consistently returns smaller
objective value (orange line) than those obtained via the GW-CG counterpart from PythonOT (blue line) and its entropic regularization (green line).
This shows that the transport plans computed by PythonOT, for instance, are in fact frequently sub-optimal.

In \Cref{fig:varying-n-b}, we plot the estimated approximation ratio across different numbers of sample points.
We notice that in this scenario of Gaussian matching, the estimated approximation ratio is close to 1.0 in most instances -- this tells us that the \eqref{eq:gw-sdp-extra} frequently computes globally optimal transportation plans.  In contrast, local methods such as PythonOT often do not.  
Note that we also observed the sparsity of the SDP-GW transport plans in this varying scenario.

\begin{figure*}[htbp]
  \centering
  \begin{subfigure}[c]{0.4\textwidth}
  \captionsetup{width=\textwidth}
    \centering
    \includegraphics[scale=.42]{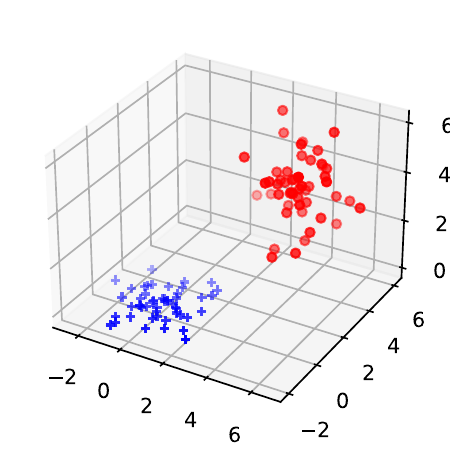}
    \caption{Datasets visualization}
    \label{fig:gaussians-a}
  \end{subfigure}
  \begin{subfigure}[c]{0.5\textwidth}
    \centering
    \includegraphics[scale=.22]{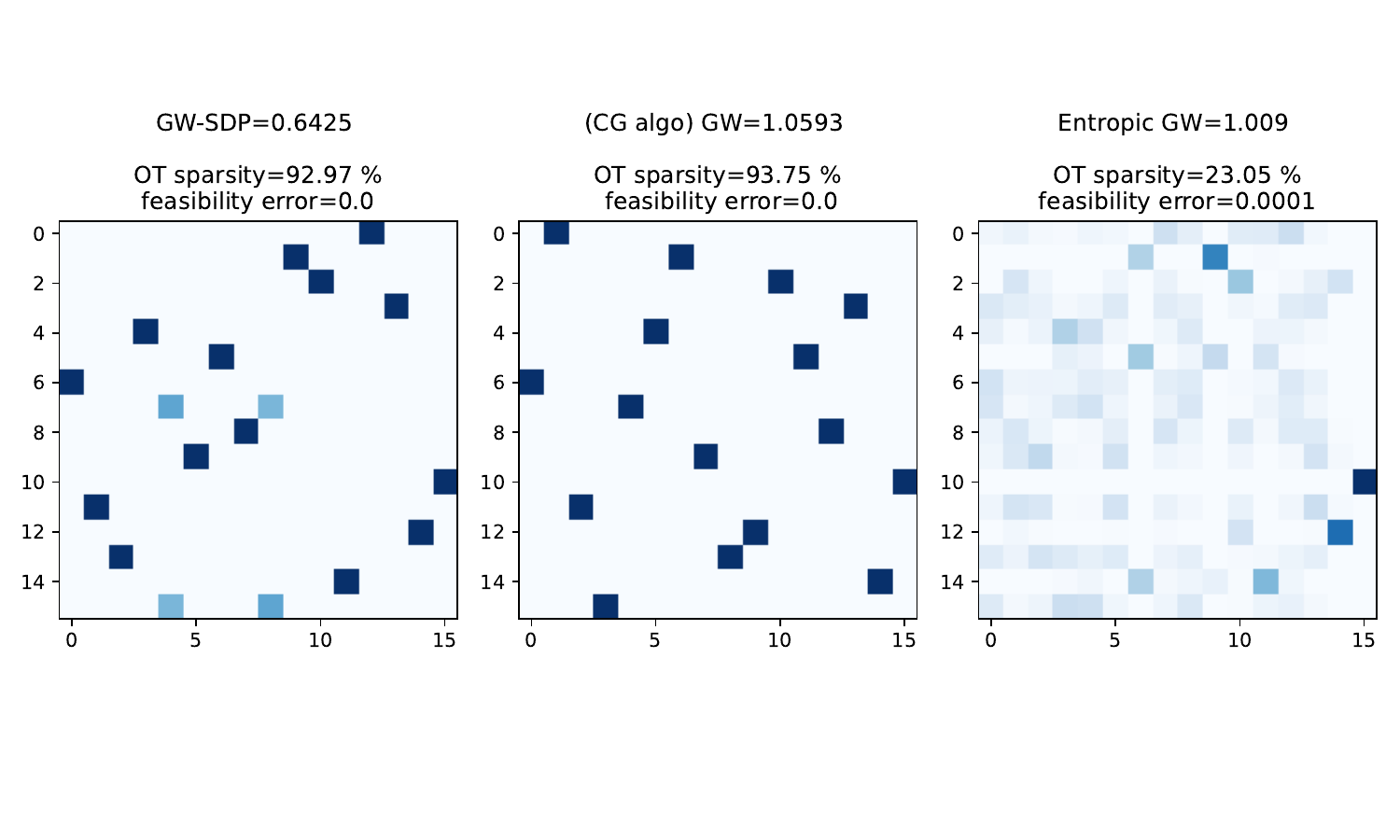}
    \captionsetup{skip=1pt}
    \caption{Solution of the OT plans}
    \label{fig:gaussians-b}
  \end{subfigure}  
  \caption{\textbf{Left:} source distribution (2D, blue dots) and target distribution (3D, red
    dots). For ease of visualization, we lift the source $\bbR^2$
    mm-spaces into target $\bbR^3$ by padding the third coordinate to zero.
  \textbf{Right:} OT solutions of GW-SDP (our algorithm), CG-GW (conditional
  gradient descent, default solver
  of PythonOT) and entropic OT solver. The OT plans from GW-SDP is
  almost sparse in the same manner to CG-GW, while the eGW is not.}
  \label{fig:gaussians}
\end{figure*}

\begin{figure}[htbp]
  \begin{subfigure}[c]{0.60\textwidth}
    \includegraphics[width=\columnwidth]{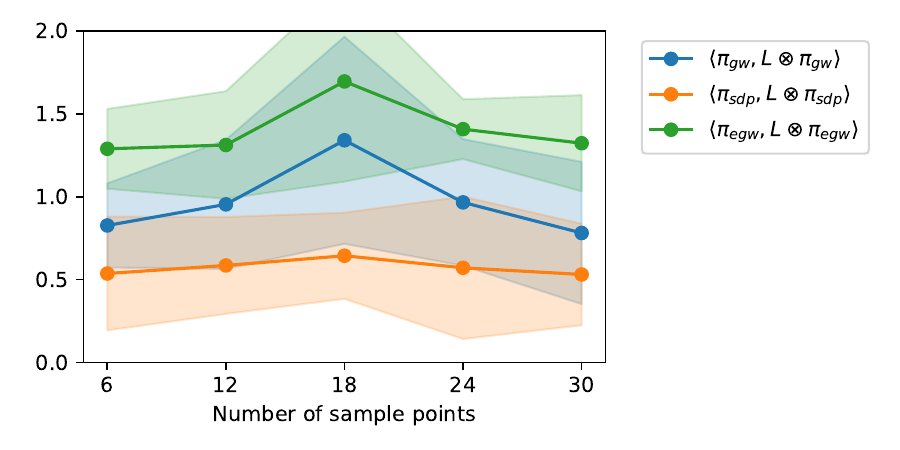}
    \caption{Objective of problem \Cref{eq:gw-sdp-extra}.}
    \label{fig:varying-n}
  \end{subfigure}
  \begin{subfigure}[c]{0.38\textwidth}
    \centering
    \includegraphics[width=\columnwidth]{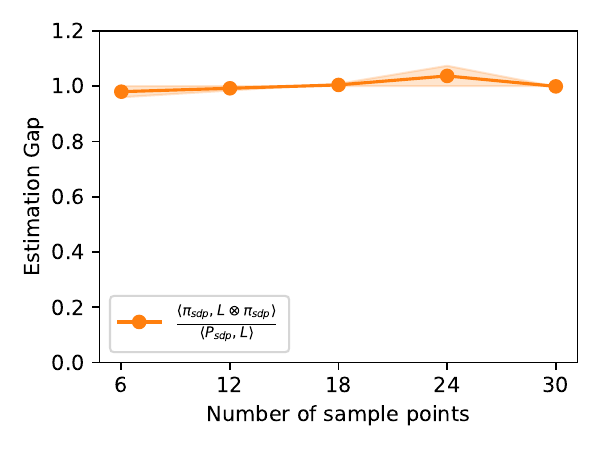}
    \caption{Estimation gap (RHS of \Cref{eq:estimation-gap}).}
    \label{fig:varying-n-b}
  \end{subfigure}
  \caption{Value of the objective (left) and approximation ratio (right) with a varying number of sample points, calculated on 10 runs of the Gaussian matching experiment.}
\end{figure}

\textbf{Scenario where $\mathbf{m \neq n}$.}  The bulk of our experiments focus on the setting where $m=n$.  We performed an experiment where $m \neq n$: the number of samples in one distribution is fixed ($n=8$) and we vary the number of samples $m$ in the other distribution.  From our results in \Cref{fig:gaussian-different-mn}, we notice that the relaxation is exact whenever $m$ is a multiple of $n$.  On the other hand, when $m$ is not a multiple of $n$, we still observe exactness, but much less frequently.

\begin{figure}[htbp]
  \begin{subfigure}[c]{0.48\textwidth}
    \centering
    \includegraphics[width=0.8\columnwidth]{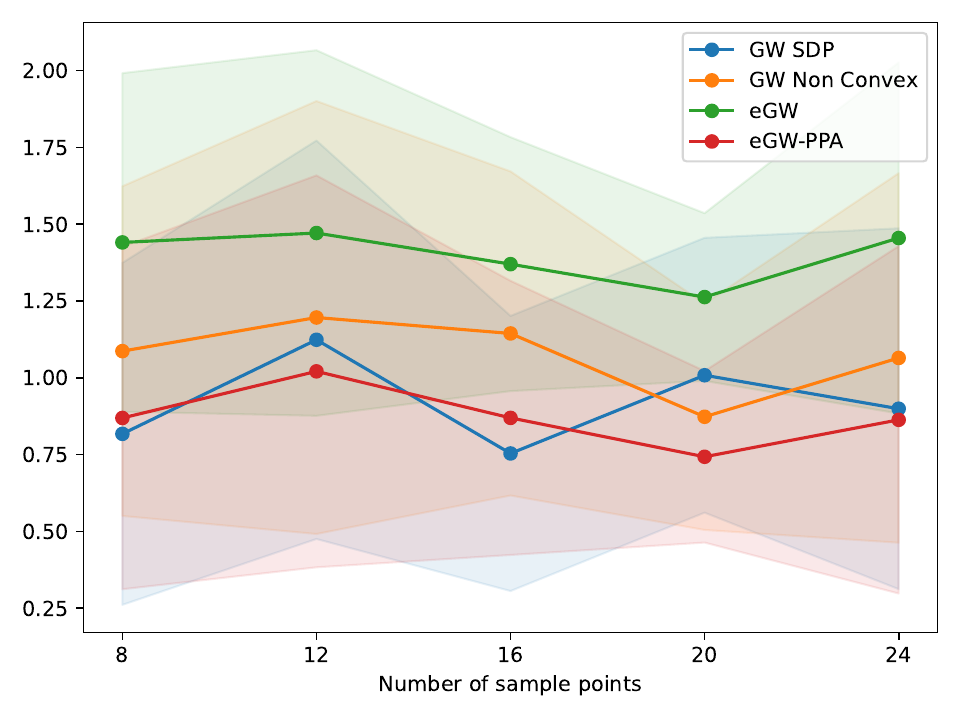}
    \caption{GW Distances with different solvers.}
  \end{subfigure}
  \begin{subfigure}[c]{0.48\textwidth}
    \centering
    \includegraphics[width=0.8\columnwidth]{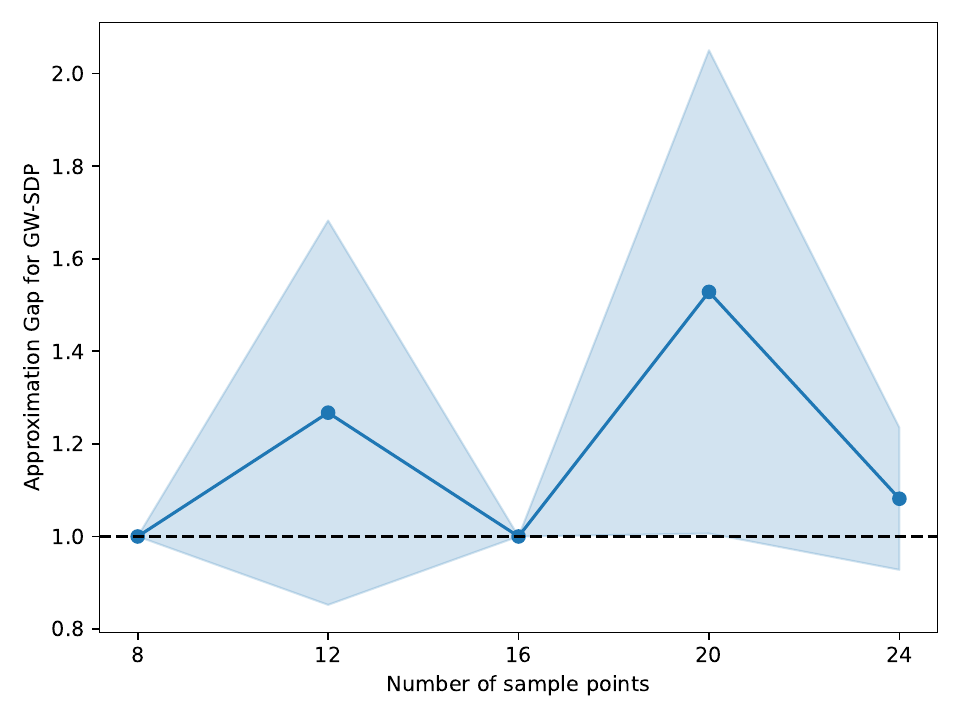}
    \caption{Approximation Gap for GW-SDP.}
  \end{subfigure}
    \caption{Gaussian Matching experiments with the sample points $m$ from source distribution varying while the sample points from target distribution keeping fixed $n=8$. Average of 20 runs.}
    \label{fig:gaussian-different-mn}
\end{figure}

\paragraph{Runtime Comparisons}

\Cref{tab:runtime} presents the run-time of the GW-SDP problem in Experiment 1, running on a PC with 8 cores CPU and 32GB of RAM.
In these experiments, the cost matrix $C$ is pre-computed (i.e. assumed given).  
As such, the run-time is independent of the data dimension. 
The GW-SDP has a matrix of dimension $mn\times mn$,  which is slower than most local and entropic solvers.  
However, the solvers we implement (SCS and MOSEK) are off-the-shelf and are general SDP solvers that do not exploit special structures in the problem and do not provide options to use initialization of the transport plans.
(SCS is a first-order method, but we are otherwise unaware of its complexity).
We want to emphasize that in most settings where SDPs are applied, one will always try to develop specialized solvers that exploit the structure of the problem. 
In our setup, the optimal solution has low rank, and is rank-one if the relaxation is tight.  There are numerous well-established methods for exploiting such structure.  
This is the subject of ongoing work.

\begin{table}[h]
\caption{Average run-time in seconds for experiment in \Cref{fig:gaussians-a} (matching Gaussians with varying number of samples $n$).}
\label{tab:runtime}
\centering
    \begin{tabular}{c c c c}
    \toprule
        n & \textbf{GW-SDP} & \textbf{GW-CG} & \textbf{eGW} ($\varepsilon=0.1$) \\ \midrule
        6 & 0.2437 (0.0265) & 0.0005 (0.000041) & 0.226 (0.1145) \\
        12 & 11.615 (2.4088) & 0.0006 (0.00003) & 0.2596 (0.0726) \\
        20 & 216.3645 (14.1123) & 0.0014 (0.000017) & 0.4923 (0.1500) \\ \bottomrule
    \end{tabular}
\end{table}
\textbf{Comparisons of GW-SDP solver and GW-CG solver when number of sample points $\mathbf{n}$ increase.} We increase the number of samples for GW-CG (non-convex GW solver using conditional gradient descent or Frank-Wolfe algorithm) vs our GW-SDP solver for a fixed number of samples. From \Cref{table:gw-sdp-gw-cg-varying}, we noticed that the objective value for GW-CG decreases as we increase the number of samples. For 100000 sample points, the GW-CG algorithm is more expensive and has a poorer objective value than our method with 10 sample points. This suggests that our method can give good approximations of the GW distance with fewer sample points than existing methods.

\begin{table}[h]
\caption{Comparisons of GW-SDP solver and GW-CG solver with a varying number of sample points $n$.}
\label{table:gw-sdp-gw-cg-varying}
\centering
\begin{tabular}{ccccc}
\toprule
\multicolumn{1}{c}{\textbf{n}} & \multicolumn{1}{c}{\textbf{GW-SDP}} & \multicolumn{1}{c}{\textbf{GW-SDP Runtime (s)}} & \multicolumn{1}{c}{\textbf{GW-CG}} & \multicolumn{1}{c}{\textbf{GW-CG runtime (s)}} \\ \midrule
10                             & 0.4577                              & 6.3753                                          & 1.135940                           & 0.000389                                       \\
100                            &                                     &                                                 & 0.629425                           & 0.007571                                       \\
1000                           &                                     &                                                 & 0.540984                           & 2.520011                                       \\
10000                          &                                     &                                                 & 0.496796                           & 138.358954                                     \\ \bottomrule
\end{tabular}
\end{table}


\subsection{Graph Community Matching}
\label{ssec:graph-sbm-matching}
The objective of this task is to find matching between two random graphs that are drawn from the stochastic block model (SBM) \citep{abbe2017community,holland1983stochastic} with fixed inter/intra-clusters probability (the probability that nodes inside and outside a cluster are connected, respectively).
The source is a three-cluster SBM whose intra-cluster probability is
$p=\{1.0, 0.95, 0.9\}$, and the target is a two-cluster SBM whose intra-cluster probability is $p=\{1.0, 0.9 \}$.
The inter-clusters probability is all set to 0.1.
The distance matrices on each graph are created first by simulating the node features drawn from Gaussian distributions with uniform weights.
Subsequently, we compute the $\ell_2$ norm between nodes and shrink the value of disconnected nodes to zero to form the distance matrices.

We compare the transportation plans obtained using our methods with the baseline comparisons GW-CG and eGW in \Cref{fig:sbm-transport-plan}.  We note that the \eqref{eq:gw-sdp-extra} model typically returns a transport plan with a smaller total transportation cost (i.e., a smaller objective value) $\inner{\pi_{sdp}, \bL \otimes\pi_{sdp}}$.  This trend is consistent with our observations in the previous experiments.  
Nevertheless, we see a degree of similarity between the transportation plans provided as output by all three methods. In addition, the transportation plans computed by our method and GW-CG are both reasonably sparse. This fact is observed in multiple runs of different seeds and graph sizes.   

\subsection{Extension of GW-SDP to Structured Data}
%

In this example, we consider an extension of the \eqref{eq:gw-sdp-extra} to structured data, more specifically graphs with node features similar to the Fused-GW distance in \citep{titouan19a}.
The discrete metric-measure space is now described by the tuple $(F, C, \alpha)
\in \bbR^{m\times d} \times \bbR^{m\times m} \times \Sigma_m $, where $F
\egaldef (f_i)_i\in \bbR^d$ encodes the feature information of the sample
point.
The Fused GW-SDP (FGW-SDP) formulation is given by
%
\begin{align}
  \label{eq:fgw-sdp-extra} 
  \text{FGW-SDP}(M_{FG},C,D,\alpha,\beta, \xi) &\egaldef \min_{ \substack{\pi \in \bbR^{m \times n }, \\ P \in \bbR^{mn \times mn} }
  } 
  \quad (1 - \xi) \langle M_{\alpha,\beta}, \pi \rangle + \xi \langle \bL(C, D), P \rangle \nonumber \\
  \text{s.t.} \quad & \begin{pmatrix}
    P & \text{vec}(\pi)\\
    \text{vec}(\pi)^\top & 1
    \end{pmatrix} \succeq 0  \tag{FGW-SDP} \\
    & \pi \in \Pi(\alpha,\beta) \nonumber \\
    & P \mathrm{vec}(e_i \mathbbm{1}_n^\top) = \alpha_i \text{vec}(\pi), i \in [m] \nonumber \\
    & P \mathrm{vec}( \mathbbm{1}_m e_j^\top) = \beta_j \text{vec}(\pi), j \in [n] \nonumber \\
    & P \geq 0 \nonumber,
\end{align}
with $M_{FG} = d(f_j,g_j)_{i,j}$ encodes the distance between node features, and $\xi \in [0, 1]$ the interpolation parameter.
\Cref{fig:sbm-transport-plan-fused} shows the result of matching two SBM graphs
with the same setting as in \Cref{ssec:graph-sbm-matching}, with the exception
that now we input the feature to calculate $M_{FG}$ by $\ell_2$ norm, and the
structured matrices are the shortest path matrices obtained from the adjacency
matrices of the graphs.
We set $\xi=0.8$ for this example.
The figure shows that the output OT plans and values of \eqref{eq:fgw-sdp-extra} and FGW-CG (using
PythonOT) are identical, while entropic Fused-GW returned a higher value and a
denser transport plan.
This indicates that the SDP relaxation of Fused-GW can be useful in graph matching
applications, akin to Fused-GW.

\begin{figure}[h]
  \centering
  \begin{subfigure}[c]{0.49\columnwidth}
  \centering
  \includegraphics[width=\columnwidth]{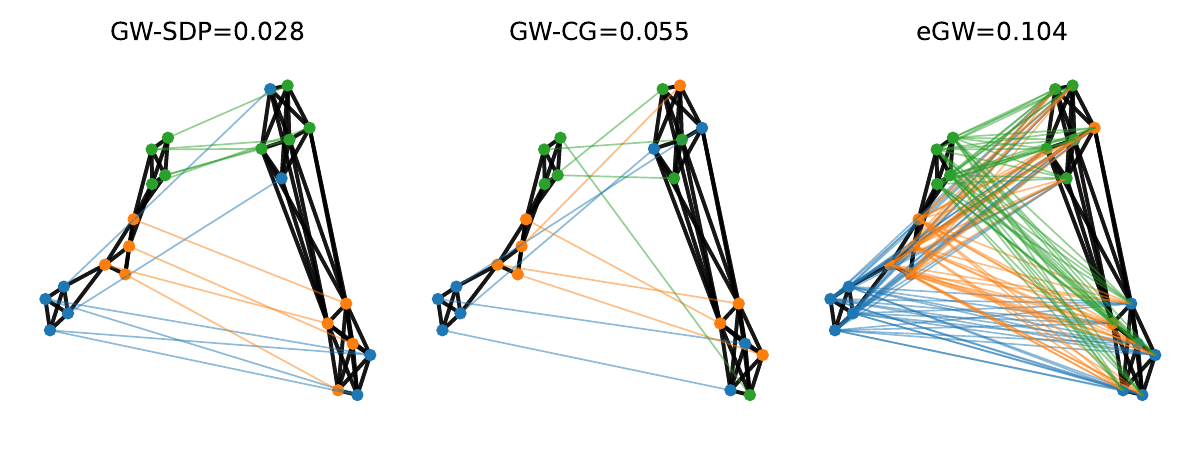}  
    \caption{Matching with GW.}
    \label{fig:sbm-transport-plan}
  \end{subfigure}
  \begin{subfigure}[c]{0.49\columnwidth}
    \centering
    \includegraphics[width=\columnwidth]{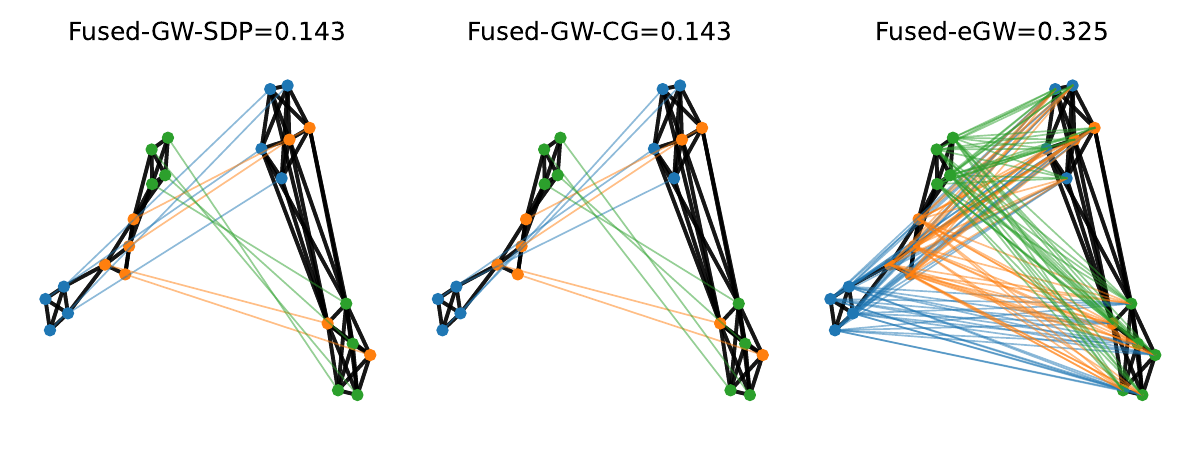}
    \caption{Matching with FGW.}
    \label{fig:sbm-transport-plan-fused}
  \end{subfigure}  
  \caption{Value of the objective on the synthetic graph matching task, from
    the three-block SBM (left) to the two-block SBM (right). \textbf{Upper:}
    calculated using GW. \textbf{Lower:} calculated using Fused-GW.}
\end{figure}

\subsection{Using GW-SDP on Realistic Shape-Matching Task}

We use a publicly available dataset of triangular meshes \citep{sumner2004deformation}. The dataset comprises $72$ objects from seven different classes, from which we chose samples of class cat, elephant, and horse. For each object, we first chose $4$ representative points (the right back foot, the left front foot, the nose, and the tail) for each object and then selected another $14$ points following the Euclidean farthest point sampling (fps) procedure. The distance matrices $C$ and $D$ are computed using Dijkstra's algorithm. Each object's probability measure is chosen to be uniform. We apply \eqref{eq:gw-sdp-extra} to the corresponding metric-measure spaces to determine the correspondence between the selected vertices across different objects. Two representative examples are given in \Cref{fig:correspondence}. For better visualization, in the representative examples we sampled only $6$ points ($4$ representative points and $2$ selected using fps).

\begin{figure}[htbp]
  \begin{subfigure}[c]{0.48\textwidth}
    \centering
    \includegraphics[width=0.8\columnwidth]{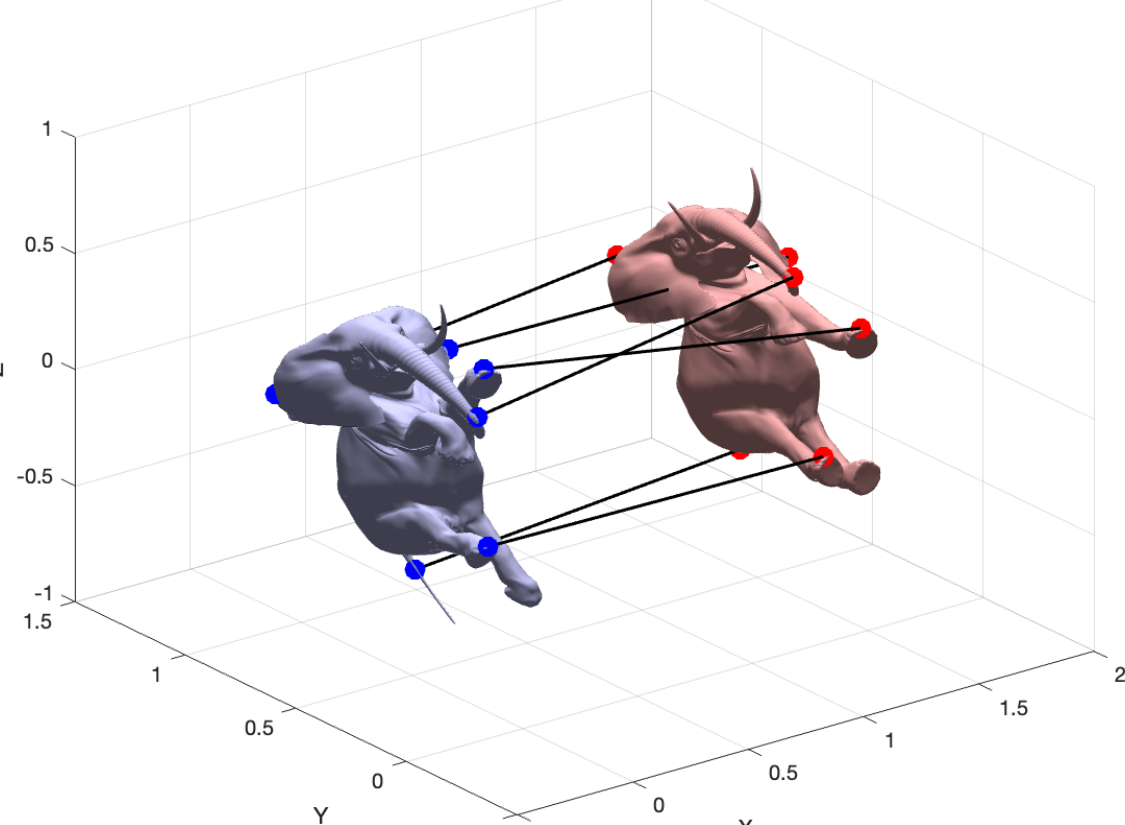}
    \caption{}
  \end{subfigure}
  \hfill
  \begin{subfigure}[c]{0.48\textwidth}
    \centering
    \includegraphics[width=0.8\columnwidth]{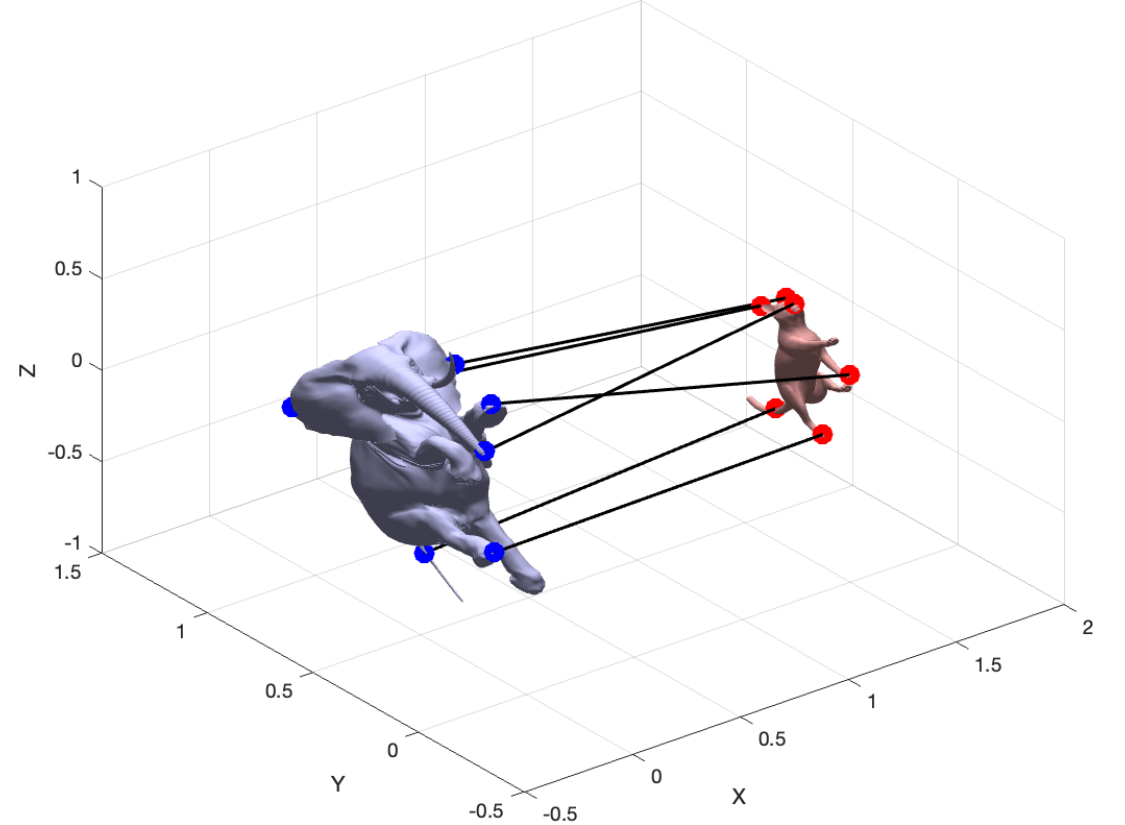}
    \caption{}
  \end{subfigure}
  \caption{Correspondence between different 3D objects obtained by \eqref{eq:gw-sdp-extra}. Left: Correspondence between two elephants. Right: Correspondence between an elephant and a cat. For both cases, \eqref{eq:gw-sdp-extra} returns one-one mappings. }
  \label{fig:correspondence}
\end{figure}

\Cref{table:shape-matching} illustrates the results when we perform matching of distance matrices across different objects. In general, we expect shapes of the same animals to have a smaller GW distance than shapes of different animals, which is indeed the case for the three GW formulations. We still notice that GW-SDP consistently returns the smallest value when performing the same matching task.

\begin{table}[H]
\caption{Value of different GW formulations for the realistic 3D shape matching dataset, visualized in \Cref{fig:correspondence}. GW-SDP consistently returns the smallest value when performing the same matching task.}\label{table:shape-matching}
\centering
\begin{tabular}{lccc}
\toprule
\multicolumn{1}{c}{\textbf{}} & \multicolumn{1}{c}{\textbf{GW-SDP}} & \multicolumn{1}{c}{\textbf{GW-CG}} & \multicolumn{1}{c}{\textbf{eGW-PPA}} \\ \midrule
Elephant-Elephant             & 0.007416                            & 0.043879                           & 0.025688                             \\
Elephant-Cat                  & 0.015695                            & 0.050594                           & 0.042214                             \\
Cat-Cat                       & 0.006549                            & 0.016634                           & 0.006757                             \\
Cat-Horse                     & 0.011040                            & 0.033736                           & 0.011041                             \\
Horse-Horse                   & 0.006287                            & 0.033768                           & 0.007395                             \\
\bottomrule
\end{tabular}
\end{table}
\section{Duality}\label{sec:duality}

Given a generic optimization instance, the dual optimization instance concerns the task of finding optimal lower bounds to the primal instance.  The (Lagrangian) dual to {\em any} optimization instance is a convex program in general and provides a principled way of obtaining convex relaxations of difficult optimization instances.  We briefly discuss some of these relationships. A more detailed discussion on the duality can be found in \Cref{sec:duality}.

It is possible to obtain the relaxation \eqref{eq:gw-sdp-extra} via duality.  Concretely, let \eqref{eq:gw++} refer to the original GW problem instance \eqref{eq:gw-qcqp} with the additional and {\em redundant} constraints \eqref{eq:gw-nonneg} and \eqref{eq:gw-margi}.  Let \eqref{eq:gw-dual} refer to the Lagrangian dual of the proposed semidefinite relaxation \eqref{eq:gw-sdp-extra}.  
%
\begin{theorem} 
 \label{thm:gw_dual_GW++}
    The optimization instance \eqref{eq:gw-dual} is the Lagrangian dual of \eqref{eq:gw++}, which is \eqref{eq:gw-qcqp} with the additional constraints \eqref{eq:gw-nonneg} and \eqref{eq:gw-margi}.
\end{theorem}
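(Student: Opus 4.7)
The plan is to compute the Lagrangian dual of \eqref{eq:gw++} directly and identify the result with \eqref{eq:gw-dual}. Since the added constraints \eqref{eq:gw-nonneg} and \eqref{eq:gw-margi} are written in terms of the auxiliary variable $P$, I first interpret them inside \eqref{eq:gw++} via the substitution $P = \mathrm{vec}(\pi)\mathrm{vec}(\pi)^\top$. This turns them into redundant quadratic (in)equalities in the single variable $\pi$: componentwise $\pi_{ij}\pi_{kl} \geq 0$ for the nonnegativity, and the rank-one version of the marginal identities, e.g.\ $\sum_{i'} \pi_{i'j}\pi_{kl} = \beta_j \pi_{kl}$ for all $j,k,l$, and the analogous identity for $\alpha$. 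These are indeed redundant whenever $\pi \in \Pi(\alpha,\beta)$ and $\pi \ge 0$, so \eqref{eq:gw++} and \eqref{eq:gw-qcqp} have the same optimal value, but adding them changes the dual.

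Next, I form the Lagrangian $\mathcal{L}(\pi;\, y, \lambda, \Lambda, R, S)$ of \eqref{eq:gw++} by assigning multipliers $y$ to the linear marginal constraints $\pi \in \Pi(\alpha,\beta)$, $\lambda \geq 0$ to the elementwise $\pi \geq 0$, $\Lambda \geq 0$ to the quadratic nonnegativity coming from \eqref{eq:gw-nonneg}, and $R$, $S$ to the two families of redundant quadratic marginal equalities coming from \eqref{eq:gw-margi}. After rearranging terms, $\mathcal{L}$ takes the form $\mathrm{vec}(\pi)^\top M\, \mathrm{vec}(\pi) + 2\, b^\top \mathrm{vec}(\pi) + c$, where $M = M(\Lambda, R, S)$ is built from $L$ and the quadratic multipliers, $b = b(y, \lambda, R, S)$ collects all linear-in-$\pi$ contributions, and $c = c(y)$ is the constant part.

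The Lagrangian dual is $\inf_\pi \mathcal{L}$. By the standard homogenization argument, this infimum is $-\infty$ unless the $(mn+1) \times (mn+1)$ symmetric block matrix with $M$ in the top-left, $b$ on the border, and $c$ in the corner is positive semidefinite, in which case the infimum equals zero relative to that constant. This PSD condition is precisely the LMI appearing in \eqref{eq:gw-dual}; combined with the sign restrictions on $\lambda$ and $\Lambda$ and the linear equality structure imposed by $R$ and $S$, one reads off that the dual maximization over $(y, \lambda, \Lambda, R, S)$ matches \eqref{eq:gw-dual} term by term.

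The main obstacle is pure bookkeeping: one has to organize the contributions of $\Lambda$, $R$, and $S$ into the quadratic block $M$, and those of $y$, $\lambda$, $R$, $S$ into the linear part $b$, so that the resulting LMI cleanly reproduces the SDP written as \eqref{eq:gw-dual}, including matching the coefficients of $\alpha_i$ and $\beta_j$ in the dualization of \eqref{eq:gw-margi}. As a sanity check and conceptual shortcut, one may equivalently derive \eqref{eq:gw-dual} as the SDP dual of \eqref{eq:gw-sdp-extra} directly and invoke the well-known fact that SDP relaxation and Lagrangian dualization of a QCQP coincide; the two routes yield the same convex program, confirming the theorem.
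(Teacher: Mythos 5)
Your proposal is correct and follows essentially the same route as the paper: the paper also forms the Lagrangian dual of \eqref{eq:gw++} directly, the only cosmetic difference being that it keeps $P$ as a variable and assigns a matrix multiplier $Y$ to the constraint $P=\mathrm{vec}(\pi)\mathrm{vec}(\pi)^\top$ (minimizing over the free $P$ then forces $Y$ to equal exactly the Hessian that your multipliers $\Lambda,R,S$ assemble), whereas you substitute $P$ out first, so your $(\Lambda,R,S)$ are the paper's $(Z,\eta^i,\theta^j)$. Only the boundedness step should be stated precisely: the infimum of the quadratic Lagrangian is finite iff its Hessian is PSD and the linear term lies in its range, and the LMI of \eqref{eq:gw-dual} is then obtained via an epigraph variable $t$ and a Schur complement together with the change of variable $y=-\tfrac{1}{2}(p+z)$, rather than by placing the constant term of the Lagrangian in the corner of the bordered matrix.
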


The (duality) gap between \eqref{eq:gw-dual} and \eqref{eq:gw++} is non-zero in general, and is equal to zero precisely when the convex relaxation \eqref{eq:gw-sdp_standard} succeeds.  These can be characterized by a rank condition:

\begin{proposition}
    Let $P_{sdp}$ and $\pi_{sdp}$ be the solution to \eqref{eq:gw-sdp-extra}.  Suppose the matrix variable has rank equals to one; that is
\begin{equation*}
    \text{rank} \begin{pmatrix} P_{sdp} &  \text{vec}(\pi_{sdp}) \\  \text{vec}(\pi_{sdp})^\top & 1 \end{pmatrix} = 1.
\end{equation*}
    Then the duality gap is zero; i.e., strong duality holds.
\end{proposition}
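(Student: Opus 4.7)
The plan is to use the rank-one hypothesis to produce a feasible point of \eqref{eq:gw++} whose objective value coincides with the optimum of \eqref{eq:gw-sdp-extra}, then sandwich the dual value between the primal values and invoke SDP strong duality for the pair \eqref{eq:gw-sdp-extra}--\eqref{eq:gw-dual}.

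First I would unpack the rank-one condition. A rank-one PSD matrix of this form must factor as
\[
\begin{pmatrix} P_{sdp} & \text{vec}(\pi_{sdp}) \\ \text{vec}(\pi_{sdp})^\top & 1 \end{pmatrix} = \begin{pmatrix} \text{vec}(\pi_{sdp}) \\ 1 \end{pmatrix} \begin{pmatrix} \text{vec}(\pi_{sdp})^\top & 1 \end{pmatrix},
\]
so in particular $P_{sdp} = \text{vec}(\pi_{sdp})\text{vec}(\pi_{sdp})^\top$. Since $\pi_{sdp} \in \Pi(\alpha,\beta)$ is already enforced by \eqref{eq:gw-sdp-extra}, and since the ``basic observation'' preceding \eqref{eq:simple_feas_ineq} in the text shows that any such rank-one lift automatically satisfies the constraints \eqref{eq:gw-nonneg} and \eqref{eq:gw-margi}, the plan $\pi_{sdp}$ is feasible for \eqref{eq:gw++}.

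Next I would match the objective values. By construction,
\[
\langle L, P_{sdp} \rangle = \text{vec}(\pi_{sdp})^\top L\, \text{vec}(\pi_{sdp}) = \langle \pi_{sdp}, \mathbf{L} \otimes \pi_{sdp} \rangle,
\]
so $v(\text{GW++}) \le \langle L, P_{sdp}\rangle = v(\text{GW-SDP})$. On the other hand, \eqref{eq:gw-sdp-extra} is a relaxation of \eqref{eq:gw++} (apply the basic observation to an optimizer of \eqref{eq:gw++}), which gives the reverse inequality $v(\text{GW-SDP}) \le v(\text{GW++})$. Hence $v(\text{GW-SDP}) = v(\text{GW++})$. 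By Theorem~\ref{thm:gw_dual_GW++}, \eqref{eq:gw-dual} is the Lagrangian dual of both the semidefinite program \eqref{eq:gw-sdp-extra} and of \eqref{eq:gw++}, so weak duality yields $v(\text{GW-dual}) \le v(\text{GW-SDP}) = v(\text{GW++})$.

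The remaining step is the upper bound $v(\text{GW-dual}) \ge v(\text{GW-SDP})$, i.e.\ SDP strong duality for the primal--dual pair \eqref{eq:gw-sdp-extra}--\eqref{eq:gw-dual}. This is the main obstacle, since it is not automatic and requires a Slater-type constraint qualification. The cleanest route is to exhibit a strictly feasible point of \eqref{eq:gw-sdp-extra}: one can take $\pi = \alpha \beta^\top$ (the independent coupling) and $P$ slightly perturbed from $\text{vec}(\pi)\text{vec}(\pi)^\top$ in the interior of the PSD cone while preserving the linear equalities; the marginal and non-negativity constraints are then satisfied with strict inequality for the non-negativity part and the block matrix is strictly positive definite, so standard conic duality gives $v(\text{GW-dual}) = v(\text{GW-SDP})$. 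Combining with the chain above yields $v(\text{GW-dual}) = v(\text{GW++})$, i.e.\ zero duality gap. The discussion of Slater's condition for this relaxation is deferred to Appendix~\ref{sec:duality}, which is where the detailed verification most naturally lives.
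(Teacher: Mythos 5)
Your overall structure mirrors the paper's: the rank-one hypothesis forces $P_{sdp} = \text{vec}(\pi_{sdp})\text{vec}(\pi_{sdp})^\top$, hence $(\pi_{sdp},P_{sdp})$ is feasible for \eqref{eq:gw++}, and the result then follows by sandwiching the optimal values using weak duality and the equality $v(\text{GW-SDP}) = v(\text{GW-Dual})$. That part is correct and is what the paper does.

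The gap is in how you justify $v(\text{GW-Dual}) = v(\text{GW-SDP})$. You propose exhibiting a \emph{primal} strictly feasible point of \eqref{eq:gw-sdp-extra} by taking $\pi = \alpha\beta^\top$ and perturbing $P$ inside the PSD cone while preserving \eqref{eq:gw-margi}. This is impossible: summing the constraints $P\,\mathrm{vec}(e_i\mathbbm{1}_n^\top) = \alpha_i\,\text{vec}(\pi)$ over $i\in[m]$ gives $P\,\mathbbm{1}_{mn} = \text{vec}(\pi)$, and $\text{vec}(\pi)^\top\mathbbm{1}_{mn} = 1$ for any coupling, so every feasible pair satisfies
\begin{equation*}
\begin{pmatrix} P & \text{vec}(\pi) \\ \text{vec}(\pi)^\top & 1 \end{pmatrix}\begin{pmatrix} \mathbbm{1}_{mn} \\ -1 \end{pmatrix} = 0,
\end{equation*}
i.e.\ the block matrix is \emph{always} singular and primal Slater fails regardless of how you choose $\pi$ and $P$. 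The paper sidesteps this by verifying Slater's condition on the \emph{dual} side instead: Theorem~\ref{thm:gw_dual_gw_sdp} constructs a strictly feasible point of \eqref{eq:gw-dual}, which yields the needed strong duality. Since that theorem is already in hand, the clean fix is simply to cite it rather than to re-derive it with a primal Slater argument that cannot work.
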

\section{Related work}

There is a substantial body of prior work concerning the GW problem in the literature. We briefly discuss some of these and explain the novelty of our work.

First, the work in \cite{titouan19a} applies an alternating minimization-type approach based on the conditional gradient (Frank-Wolfe) algorithm to find local optima to the GW problem.  This algorithm is currently implemented and is the default choice within the Python Optimal Transport package \citep{flamary2021pot}. The basic idea is to start by computing the partial derivative of the objective \Cref{eq:gw-original} with
respect to $\pi$:
\begin{equation*}
    G(\pi) = 2 \ \bL(C,D) \otimes \pi,
\end{equation*}
This is a linear OT problem that can be solved using classical OT solvers.
One proceeds with an alternating minimization scheme in which one updates the gradient $G$ with respect to $\pi^{(i-1)}$, subsequently solves for $\pi^{(i)}$ with the loss $G(\pi)$ at each $i$th-iteration, and finally projects $\pi^{(i)}$ into the feasible set by performing a line-search.  The Conditional Gradient-based approach is not guaranteed to find globally optimal solutions; in fact, our numerical experiments in Section \ref{sec:experiments} suggest that this is quite often the case.  Last, we briefly note that the work in \cite{kerdoncuff2021sampled} suggests a similar alternating numerical scheme.

Second, there is a body of work that aims at developing numerical schemes for finding transportation plans that approximately minimize the GW objective without incurring the expensive $O(m^2 n^2)$ dependency.  For instance, the work in \cite{peyre2016gromov} introduces an entropic regularization into the GW objective -- this leads to a formulation that permits Sinkhorn scaling-like updates, much like the original scheme to solve entropic Wasserstein distance in \cite{cuturi2013sinkhorn}.  The work of \cite{VFTCC:19} adapts the ideas from the Wasserstein problem in one dimension in which closed-form solutions are available (this is known as the sliced Wasserstein problem, \citealt{rabin2012wasserstein}) to the GW context.  Finally, the work in \cite{SVP:21,vincent-cuaz21a} relaxes the constraints on the probability distributions.  These numerical schemes frequently lead to numerical schemes that are far more scalable than other existing methods, but they ultimately optimize for an objective that is different from the GW problem. 
 
%

There is an interesting piece of work in \cite{scetbon2022linear}, which operates under the assumption that the cost matrices have low-rank structure.  While the algorithm does not give guarantees about global optimality, it raises an interesting future direction; namely, could we develop numerical schemes for our proposed SDP relaxation that also exploit similar structures?

Finally, we discuss prior works that do in fact address global optimality (which is the heart of this paper): a recent work is in \cite{mula2022moment}, which suggests the use of moment sum-of-square (SOS) relaxation technique to solve the GW problem.  The standard SDP relaxation for QPs on which our work is based on may be viewed, in a suitable sense, as the first level of the SOS hierarchy for polynomial optimization. Unfortunately, and as we note in Section \ref{sec:sdprelaxation}, this alone is insufficient -- the real novelty in our work is the addition of constraints that substantially strengthen the overall convex relaxation. A piece of related work by \cite{VBBW:16} proposes a SDP relaxation of the closely related Gromov-Hausdorff problem, with an extension to the Gromov-Wasserstein problem.  The relaxation is primarily designed for the Gromov-Hausdorff problem and is not equivalent to ours.  The formulation also requires the probability distributions to be uniform whereas we do not.  Another recent work by \cite{RKK:23} also studies the Gromov-Hausdorff problem, and proposes a Branch-and-Bound approach for solving integer programs. The GW problem does not contain integer constraints, and hence Branch-and-Bound techniques are not applicable. That said, SDP relaxations can be used in conjunction with Branch-and-Bound.  It would be interesting to see if our proposed SDP relaxations for GW suggest suitable relaxations for the Gromov-Hausdorff problem, which can be used in conjunction with the Branch-and-Bound techniques in \cite{RKK:23}.
%

%

\section{Conclusions and Future Directions} \label{sec:futuredirections}
\raggedbottom
In this work, we proposed a semidefinite programming relaxation of the Gromov-Wasserstein distance.  Our initial results suggest that the relaxation \eqref{eq:gw-sdp-extra} is strong in the sense that $\pi_{sdp}$ frequently coincides with the globally optimal solution; moreover, we are able to provide a proof when this actually happens.  These results are exciting, as it suggests a tractable approach for solving the GW problem -- at least for examples of interest -- which was previously assumed to be quite difficult. 

An interesting future direction is to understand precisely how difficult is an instance of the GW problem. The fact that our convex relaxations work very well for the examples we considered suggests that the GW problem might not be as difficult as we think.  It is important to bear in mind that these cost tensors $\bL$ have structure -- they arise from the difference of actual cost matrices.  Could it be that the difficult instances of the GW problem correspond to cost tensors $\bL$ that are not realizable as the difference of cost matrices; e.g., they violate the triangle inequality?  A concrete question to this end is: Is the GW problem corresponding to cost tensors $\bL$ arising in practical instances tractable to solve?

A second important future direction concerns computation.  One limitation of our proposed convex relaxation is that it is specified as the solution of an SDP in which the matrix dimension is $mn$; that is, it is equal to the dimension of the transport plan.  The prohibitive dependence on the data dimension means that we are currently only able to apply the relaxation on moderate sized instances using off-the-shelf SDP solvers.  It would be of interest to develop specialized algorithms to solve the proposed relaxation \eqref{eq:gw-sdp-extra}.

\section*{Acknowledgements}
We thank the anonymous reviewers for their helpful comments and feedback that helped improve our work. B.T. Nguyen is supported by NUS Start-up Grant A-0004595-00-00.

\bibliographystyle{apalike}
\bibliography{biblio}

\newpage
\appendix

\section{Proofs of Main Results}

\begin{proof}[Proof of Proposition \ref{prop:unbounded}]
    Let $1\leq s,t \leq mn$ be coordinates such that $L_{s t} > 0$. Let $v := v_{(s,t)} \in \bbR^{m n}$ be a vector whose $s$-th entry is $1$, whose $t$-th entry is $-1$, and whose remaining entries are zeros. 
    Let $\tilde{\pi} \in \Pi(\alpha,\beta)$ be any transportation map and consider the matrix $P_c = \text{vec}(\tilde{\pi}) \text{vec}(\tilde{\pi})^\top + c vv^\top$.  Notice that $P_c \succeq \text{vec}(\tilde{\pi}) \text{vec}(\tilde{\pi})^\top$ for all $c \geq 0$.  Hence the choice of variables $\pi = \tilde{\pi}$ and $P = P_c$ are feasible.  Then notice that the objective evaluates to
    \begin{equation*}
    \langle L,P_c \rangle = \langle L, \text{vec}(\tilde{\pi}) \text{vec}(\tilde{\pi})^\top \rangle + c(L_{ss} - 2L_{st} + L_{tt}) = \langle L, \text{vec}(\tilde{\pi}) \text{vec}(\tilde{\pi})^\top \rangle  - 2cL_{st}.   
    \end{equation*}
    We obtain the last equality by noting that $L_{ii} = 0$ for all $i$ (this is a property of cost matrices).  The result follows by taking $c \rightarrow +\infty$. 
\end{proof} 

\section{Duality}\label{sec:duality}

Given a generic optimization instance, the dual optimization instance concerns the task of finding optimal lower bounds to the primal instance.  The (Lagrangian) dual to {\em any} optimization instance is a convex program in general.  As such, the process of deriving the dual to any optimization instance provides a principled way of obtaining convex relaxations of difficult optimization instances.  

The objective value of the dual will always be a lower bound to the primal instance -- this is precisely {\em weak duality}.  In some cases, however, the objective values of these problems may coincide, and we call such settings {\em strong duality}.

In this section, we explore these relationships in the context of the GW problem.  As we shall see, the proposed SDP relaxation \eqref{eq:gw-sdp-extra} can be viewed as being equivalent to the dual of an equivalent form of the GW problem \eqref{eq:gw-qcqp}, augmented with additional constraints that relate the linear and quadratic terms of transportation maps specified by \eqref{eq:gw-nonneg} and \eqref{eq:gw-margi}. To simplify notation, we denote
\begin{equation*}
\begin{aligned}
a_i & ~=~ \mathrm{vec}(e_i \mathbbm{1}_n^\top), \\
b_j & ~=~ \mathrm{vec}(\mathbbm{1}_m e_j^\top).
\end{aligned}
\end{equation*}
We proceed to describe the dual program.  We start by defining the following dual variables:
\begin{align}
    &\begin{pmatrix}
        Y & y \\ y^\top & t
    \end{pmatrix}
    \succeq 0
    && : \begin{pmatrix}
        P & \text{vec} (\pi) \\
        \text{vec} (\pi)^\top & 1
    \end{pmatrix}
    \succeq 0 \label{eq:dual_variables_a} \tag{a} \\
    &\lambda_i \in \bbR 
    && : a_i^\top \text{vec} (\pi) = \alpha_i, \quad i \in [m] \label{eq:dual_variables_b} \tag{b} \\
    &\mu_j \in \bbR 
    && : b_j^\top \text{vec} (\pi) = \beta_j, \quad j \in [n] \label{eq:dual_variables_c} \tag{c} \\
    &Z \ge 0 
    && : P \ge 0 \label{eq:dual_variables_d} \tag{d} \\
    &\eta^i \in \bbR^{m n} 
    && : P a_i = \alpha_i \text{vec}(\pi), \quad i \in [m] \label{eq:dual_variables_e} \tag{e} \\
    &\theta^j \in \bbR^{m n} 
    && : P b_j = \beta_j \text{vec}(\pi), \quad j \in [n] \label{eq:dual_variables_f} \tag{f}
\end{align}
As a reminder, the constraints in \eqref{eq:dual_variables_b} and \eqref{eq:dual_variables_c} specify the transportation map $\Pi(\alpha,\beta)$ while the constraints in \eqref{eq:dual_variables_e} and \eqref{eq:dual_variables_f} correspond to \eqref{eq:gw-margi}.

\begin{theorem} \label{thm:gw_dual_gw_sdp}
The Lagrangian dual of \eqref{eq:gw-sdp-extra} is given as follows
\begin{equation}\label{eq:gw-dual}\tag{GW-Dual}
\begin{aligned}
\max \quad & \quad \lambda^\top \alpha + \mu^\top \beta - t\\
\text{s.t.} \quad & \quad \begin{pmatrix}
    L - Z  + \frac{1}{2} \sum_{i=1}^m (\eta^i a_i^\top + a_i {\eta^i}^\top) + \frac{1}{2} \sum_{j=1}^n (\theta^j b_j^\top + b_j {\theta^j}^\top) & y \\ y^\top & t
\end{pmatrix} \succeq 0, \\
& \quad \sum_{i=1}^m \left( \lambda_i a_i + \alpha_i \eta^i \right) + \sum_{j=1}^n \left( \mu_j b_j + \beta_j \theta^j \right) + 2y \le 0, \\
& \quad Z \ge 0.
\end{aligned}
\end{equation}
Furthermore, strong duality holds; that is, the duality gap between \eqref{eq:gw-sdp-extra} and \eqref{eq:gw-dual} is zero.
\end{theorem}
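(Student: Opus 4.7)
The plan is to derive \eqref{eq:gw-dual} as the Lagrangian dual of \eqref{eq:gw-sdp-extra} by direct computation, then establish strong duality separately. First I would form the Lagrangian by pairing each primal constraint of \eqref{eq:gw-sdp-extra} with the dual variable listed in \eqref{eq:dual_variables_a}--\eqref{eq:dual_variables_f}: the $(mn+1)\times(mn+1)$ PSD block is paired via trace inner product with $W = \begin{pmatrix} Y & y \\ y^\top & t \end{pmatrix} \succeq 0$; the marginal equalities for $\pi$ with scalars $\lambda_i,\mu_j$; the coupling equalities $P a_i = \alpha_i \mathrm{vec}(\pi)$ and $P b_j = \beta_j \mathrm{vec}(\pi)$ with vectors $\eta^i,\theta^j \in \mathbb{R}^{mn}$; and entrywise $P \geq 0$ with $Z \geq 0$. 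The nonnegativity $\pi \geq 0$ implicit in $\Pi(\alpha,\beta)$ is retained as a hard constraint in the inner infimum (this is what will produce an inequality, rather than an equality, in the final dual).

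The second step is the computation. Use $\langle W, M\rangle = \langle Y, P\rangle + 2 y^\top \mathrm{vec}(\pi) + t$ together with $\langle \eta^i, P a_i\rangle = \langle P, \eta^i a_i^\top\rangle = \langle P, \tfrac{1}{2}(\eta^i a_i^\top + a_i {\eta^i}^\top)\rangle$, where the symmetrization is valid because $P$ is symmetric as the top-left block of a PSD matrix. Grouping by $P$ and $\mathrm{vec}(\pi)$, the Lagrangian takes the shape $\langle \Phi, P\rangle + \Psi^\top \mathrm{vec}(\pi) + (\lambda^\top \alpha + \mu^\top \beta - t)$ for an explicit symmetric matrix $\Phi$ and vector $\Psi$ in the dual variables. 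Since $P$ ranges freely over symmetric matrices in the inner infimum, $\inf_P \mathcal{L} > -\infty$ forces $\Phi = 0$; solving this equation for $Y$ and substituting into $W \succeq 0$ yields the first matrix inequality of \eqref{eq:gw-dual}, up to an innocuous sign absorption on $\eta^i,\theta^j$. Since $\pi \geq 0$ is kept hard, $\inf_{\pi \geq 0} \Psi^\top \mathrm{vec}(\pi) > -\infty$ forces $\Psi \geq 0$ entrywise, which rearranges to the componentwise vector inequality of \eqref{eq:gw-dual}. The surviving constants give the dual objective $\lambda^\top \alpha + \mu^\top \beta - t$.

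The main obstacle I anticipate is strong duality. Standard Slater on the primal fails because the coupling equalities $P a_i = \alpha_i \mathrm{vec}(\pi)$ and $P b_j = \beta_j \mathrm{vec}(\pi)$ restrict $P - \mathrm{vec}(\pi)\mathrm{vec}(\pi)^\top$ to a subspace of dimension only $(m-1)(n-1)$ in the space of symmetric $mn \times mn$ matrices, so the augmented block can never be strictly PSD while the equalities hold. My approach is to verify Slater on the dual side instead: take $Z = \epsilon \, \mathbbm{1}_{mn}\mathbbm{1}_{mn}^\top$ with small $\epsilon > 0$ to secure strict componentwise positivity; use the ample degrees of freedom in $\eta^i,\theta^j$ together with a large scalar $t$ to drive the $(1,1)$ block to strict positive definiteness; and finally choose $\lambda,\mu$ sufficiently negative entrywise so that the vector inequality becomes strict. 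Combined with primal feasibility of any rank-one lift $(\pi,\mathrm{vec}(\pi)\mathrm{vec}(\pi)^\top)$ for $\pi \in \Pi(\alpha,\beta)$, the standard conic duality theorem then delivers zero duality gap and primal attainment.
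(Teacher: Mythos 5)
Your derivation of the dual is essentially the paper's: you pair the constraints with the same dual variables (a), (b)--(f), form the Lagrangian, separate the inner infimum over the unconstrained $P$ (forcing the coefficient of $P$ to vanish, which gives the matrix equality $Y = H(\eta,\theta,Z)$ substituted into $W\succeq 0$) and over $\pi\geq 0$ (forcing the linear coefficient to be $\leq 0$). The paper does exactly this, writing $H(\eta,\theta,Z) = L - Z + \frac{1}{2}\sum_i(\eta^i a_i^\top + a_i{\eta^i}^\top) + \frac{1}{2}\sum_j(\theta^j b_j^\top + b_j{\theta^j}^\top)$ and $g(\lambda,\mu,\eta,\theta,y) = \sum_i(\lambda_i a_i + \alpha_i\eta^i) + \sum_j(\mu_j b_j + \beta_j\theta^j) + 2y$, so that part of your plan matches the paper's proof.

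For strong duality, you correctly identify the right route (Slater on the dual side), which is also the paper's approach, but your proposed strictly feasible dual point does not work, and the reason is precisely the key trick that the proof needs. With $Z = \epsilon\,\mathbbm{1}\mathbbm{1}^\top$ you have $-Z\preceq 0$ and of rank one; moreover $\frac{1}{2}\sum_i(\eta^i a_i^\top + a_i{\eta^i}^\top)$ is the symmetrization of a matrix of rank at most $m$, and the $\theta$ term likewise of rank at most $n$, so the total perturbation $-Z + \frac{1}{2}\sum_i(\cdot) + \frac{1}{2}\sum_j(\cdot)$ has rank at most $2(m+n)+1$ regardless of how "ample" the parameters $\eta^i,\theta^j$ are. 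By Weyl's inequality, adding a rank-$r$ symmetric perturbation to $L$ can shift at most $r$ eigenvalues across zero; since $L$ is a trace-zero $mn\times mn$ matrix it can have far more than $2(m+n)+1$ negative eigenvalues, so the $(1,1)$ block cannot be driven to $\succ 0$ this way. Also, $t$ sits only in the $(mn+1,mn+1)$ corner and does nothing to repair the $(1,1)$ block. The point you are missing is that the primal constraint is entrywise nonnegativity $P\geq 0$, so the dual $Z$ only needs to be entrywise nonnegative, not PSD -- hence $-Z$ need \emph{not} be $\preceq 0$. The paper takes $Z = (|\lambda_{\min}(L)|+2)\mathbbm{1}\mathbbm{1}^\top - (|\lambda_{\min}(L)|+1)I$, which is entrywise positive, yet $-Z$ contributes a full-rank $+(|\lambda_{\min}(L)|+1)I$. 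Choosing $\eta^i = (|\lambda_{\min}(L)|+2)\mathbbm{1}$ (and using $\sum_i a_i = \mathbbm{1}$) exactly cancels the $-(|\lambda_{\min}(L)|+2)\mathbbm{1}\mathbbm{1}^\top$ term, leaving $H = L + (|\lambda_{\min}(L)|+1)I\succ 0$. With $y=0$, $t=1$, $\theta^j=0$, $\mu_j=0$ and $\lambda_i = -2m$ one also gets $g = -m\mathbbm{1} < 0$. You should replace your choice of $Z$ with a construction of this flavor; the rest of your argument then goes through.
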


It turns out that it is possible to derive the dual instance \eqref{eq:gw-dual} {\em directly} from \eqref{eq:gw-qcqp}, an equivalent formulation of the original GW problem, with additional constraints specified by \eqref{eq:gw-nonneg} and \eqref{eq:gw-margi}.  Concretely, consider
\begin{equation} \label{eq:gw++} \tag{GW++}
\begin{aligned}
  \min_{ \substack{\pi \in \bbR^{m \times n } \\ P \in \bbR^{mn \times mn}} } \quad & \quad  \langle L, P \rangle \\
  \text{s.t.} \quad & \quad P = \text{vec}(\pi) \text{vec}(\pi)^\top \\
    & \quad \pi \in \Pi(\alpha,\beta) \\
    & \quad  P \mathrm{vec}(e_i \mathbbm{1}_n^\top) = \alpha_i \text{vec}(\pi), i \in [m] \\
    & \quad P \mathrm{vec}( \mathbbm{1}_m e_j^\top) = \beta_j \text{vec}(\pi), j \in [n] \\
    & \quad P \geq 0.
\end{aligned}
\end{equation}

The last three constraints on $P$ are always satisfied so long as $P = \text{vec}(\pi) \text{vec}(\pi)^\top$, where $\pi \in \Pi(\alpha,\beta)$ is a transportation map.  Hence these constraints on $P$ and $\pi$ are technically redundant within \eqref{eq:gw++}.  That is, the optimization instances \eqref{eq:gw-qcqp} and \eqref{eq:gw++} are equivalent.  However, the Lagrangian dual of these optimization instances are different, and we summarize this observation in the following. 

\begin{theorem} 
 \label{thm:gw_dual_GW++}
    \eqref{eq:gw-dual} is the Lagrangian dual of \eqref{eq:gw++}.
\end{theorem}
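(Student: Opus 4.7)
The plan is to form the Lagrangian of \eqref{eq:gw++} directly, assigning one multiplier per constraint, and verify that eliminating the primal variables $(\pi, P)$ reproduces \eqref{eq:gw-dual}. I would take $\lambda_i, \mu_j \in \mathbb{R}$ for the marginal equalities $a_i^\top \mathrm{vec}(\pi) = \alpha_i$ and $b_j^\top \mathrm{vec}(\pi) = \beta_j$; $\eta^i, \theta^j \in \mathbb{R}^{mn}$ for the quadratic-marginal equalities \eqref{eq:gw-margi}; entrywise nonnegative $Z$ and $W$ for $P \geq 0$ and $\pi \geq 0$ (the latter being implicit in $\pi \in \Pi(\alpha,\beta)$); and, crucially, a symmetric matrix multiplier $Y$ for the nonlinear equality $P = \mathrm{vec}(\pi)\mathrm{vec}(\pi)^\top$.

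Next I would collect the Lagrangian by primal variable. The coefficient of $P$ (after symmetrization) comes out to $L + Y - Z + \tilde{Q}(\eta,\theta)$, where
\[
\tilde{Q}(\eta,\theta) \;:=\; \tfrac{1}{2}\sum_i \bigl(\eta^i a_i^\top + a_i (\eta^i)^\top\bigr) + \tfrac{1}{2}\sum_j \bigl(\theta^j b_j^\top + b_j (\theta^j)^\top\bigr).
\]
Since $P$ is unconstrained inside the Lagrangian, the infimum is $-\infty$ unless this coefficient vanishes, which pins down $-Y = L - Z + \tilde{Q}(\eta,\theta)$. The remaining Lagrangian is a quadratic in $\pi$ of the form $\mathrm{vec}(\pi)^\top(-Y)\mathrm{vec}(\pi) + g^\top \mathrm{vec}(\pi)$ plus constants, where $g$ is an affine function of $(\lambda,\mu,\eta,\theta,W)$.

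I would then rewrite the infimum over $\pi$ via Schur complement as $-t$ for the smallest $t$ satisfying
\[
\begin{pmatrix} -Y & g/2 \\ g^\top/2 & t \end{pmatrix} \succeq 0.
\]
Plugging back $-Y = L - Z + \tilde{Q}(\eta,\theta)$ recovers verbatim the PSD constraint in \eqref{eq:gw-dual} with $y := g/2$. Eliminating the slack $W \geq 0$ by reading off the identification $g = 2y$ produces the linear inequality $\sum_i(\lambda_i a_i + \alpha_i \eta^i) + \sum_j(\mu_j b_j + \beta_j \theta^j) + 2y \leq 0$ of \eqref{eq:gw-dual}, and the leftover constants $-\lambda^\top \alpha - \mu^\top \beta$ become $\lambda^\top \alpha + \mu^\top \beta$ under the harmless sign-flip $\lambda \leftarrow -\lambda$, $\mu \leftarrow -\mu$ (permissible since these are unrestricted multipliers of equality constraints), yielding the objective $\lambda^\top \alpha + \mu^\top \beta - t$.

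The main bookkeeping obstacle will be tracking signs through the quadratic dualization: the multiplier $Y$ enters the $P$-linear piece with the opposite sign from the $\pi$-quadratic piece, and one must verify carefully that the $(1,1)$ block of the dual PSD constraint lands on $L - Z + \tilde{Q}(\eta,\theta)$ rather than $L - Z - \tilde{Q}(\eta,\theta)$. Conceptually, the content of the theorem is that the Schur-complement reformulation of the inner $\pi$-minimization produces exactly the SDP constraint appearing in the dual of \eqref{eq:gw-sdp-extra} established in Theorem \ref{thm:gw_dual_gw_sdp}; this is the formal mechanism by which the Lagrangian duals of \eqref{eq:gw++} and \eqref{eq:gw-sdp-extra} coincide, despite the extra nonconvex rank-one constraint implicit in $P = \mathrm{vec}(\pi)\mathrm{vec}(\pi)^\top$.
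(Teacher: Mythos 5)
Your proposal follows essentially the same route as the paper's proof of Theorem~\ref{thm:gw_dual_GW++}: attach a matrix multiplier $Y$ to the nonconvex equality $P = \mathrm{vec}(\pi)\mathrm{vec}(\pi)^\top$ (and nonnegative multipliers to $P\ge 0$ and $\pi\ge 0$), force the coefficient of $P$ in the Lagrangian to vanish so that $-Y$ is pinned to the $(1,1)$ block $L - Z + \tilde{Q}(\eta,\theta)$, and then recast the unconstrained quadratic infimum over $\pi$ as a Schur-complement LMI with epigraph variable $t$, finally relabeling the nonnegativity slack to recover the linear inequality constraint. Your sign conventions differ slightly from the paper's (which itself mixes conventions between the linear and the quadratic marginal constraints), but the flip $\lambda \leftarrow -\lambda$, $\mu \leftarrow -\mu$ that you flag resolves this, and the overall argument agrees with the paper.
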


The (duality) gap between \eqref{eq:gw-dual} and \eqref{eq:gw++} is non-zero in general, and is equal to zero precisely when the convex relaxation \eqref{eq:gw-sdp_standard} succeeds.  These can be characterized by a rank condition satisfied by the optimal solutions, namely:

\begin{proposition}
    Let $P_{sdp}$ and $\pi_{sdp}$ be the solution to GW-SDP.  Suppose the matrix variable has rank equals to one, that is
\begin{equation*}
    \text{rank} \begin{pmatrix} P_{sdp} &  \text{vec}(\pi_{sdp}) \\  \text{vec}(\pi_{sdp})^\top & 1 \end{pmatrix} = 1.
\end{equation*}
    Then the duality gap for \eqref{eq:gw++} is zero; i.e., strong duality holds.
\end{proposition}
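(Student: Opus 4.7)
The plan is to chain together three facts: (i) the block-matrix rank condition forces $P_{sdp}$ to factor as an outer product of $\mathrm{vec}(\pi_{sdp})$ with itself; (ii) this makes $(\pi_{sdp}, P_{sdp})$ feasible for \eqref{eq:gw++}, so it attains the \eqref{eq:gw-sdp-extra} optimum while living inside the \eqref{eq:gw++} feasible set; and (iii) strong duality between \eqref{eq:gw-sdp-extra} and \eqref{eq:gw-dual} (Theorem \ref{thm:gw_dual_gw_sdp}) then transfers the vanishing gap from the SDP side to the \eqref{eq:gw++} side.

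First I would exploit the rank-one hypothesis. A PSD matrix of rank one admits a factorization $ww^\top$; writing $w = (u^\top, s)^\top$ and reading off the bottom-right entry gives $s^2 = 1$, so after a sign choice $s = 1$, whence $u = \mathrm{vec}(\pi_{sdp})$ and consequently $P_{sdp} = \mathrm{vec}(\pi_{sdp})\,\mathrm{vec}(\pi_{sdp})^\top$. This is the single algebraic content of the proof and should be dispatched in one or two lines.

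Next I would verify that $(\pi_{sdp}, P_{sdp})$ is actually feasible for \eqref{eq:gw++}. The couplings constraint $\pi_{sdp} \in \Pi(\alpha,\beta)$ is already part of \eqref{eq:gw-sdp-extra}. The equality $P = \mathrm{vec}(\pi)\mathrm{vec}(\pi)^\top$ is exactly what we just established. The marginal conditions \eqref{eq:gw-margi} and the non-negativity condition \eqref{eq:gw-nonneg} are enforced directly by \eqref{eq:gw-sdp-extra}. Hence every \eqref{eq:gw++} constraint holds, and the common objective $\langle L, P_{sdp} \rangle$ is attained at this feasible point.

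Finally I would close the loop via a two-sided inequality. On one side, any \eqref{eq:gw++}-feasible $(\pi, \mathrm{vec}(\pi)\mathrm{vec}(\pi)^\top)$ is trivially \eqref{eq:gw-sdp-extra}-feasible, so $\mathrm{val}\eqref{eq:gw-sdp-extra} \le \mathrm{val}\eqref{eq:gw++}$. On the other side, the feasible point exhibited in the previous paragraph gives $\mathrm{val}\eqref{eq:gw++} \le \langle L, P_{sdp}\rangle = \mathrm{val}\eqref{eq:gw-sdp-extra}$. Combining these yields $\mathrm{val}\eqref{eq:gw++} = \mathrm{val}\eqref{eq:gw-sdp-extra}$, and Theorem \ref{thm:gw_dual_gw_sdp} upgrades this to $\mathrm{val}\eqref{eq:gw++} = \mathrm{val}\eqref{eq:gw-dual}$, which by Theorem \ref{thm:gw_dual_GW++} is precisely the claimed vanishing of the \eqref{eq:gw++} duality gap. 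There is no real obstacle here; the only point requiring care is the block-matrix factorization argument at the start, which must correctly pin down the sign $s = 1$ so that the off-diagonal block of the rank-one factor is genuinely $\mathrm{vec}(\pi_{sdp})$ and not its negation.
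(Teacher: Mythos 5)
Your argument is correct and follows essentially the same route as the paper's proof: rank one forces $P_{sdp} = \mathrm{vec}(\pi_{sdp})\mathrm{vec}(\pi_{sdp})^\top$, this tuple is feasible for \eqref{eq:gw++}, and strong duality between \eqref{eq:gw-sdp-extra} and \eqref{eq:gw-dual} together with Theorem \ref{thm:gw_dual_GW++} closes the gap. You are slightly more explicit than the paper — spelling out the two-sided inequality $\mathrm{val}\eqref{eq:gw-sdp-extra} \le \mathrm{val}\eqref{eq:gw++} \le \langle L, P_{sdp}\rangle$ and the sign normalization $s=1$ in the rank-one factorization — which the paper leaves implicit, but there is no substantive difference in approach.
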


\begin{proof}
The rank condition implies $
P_{sdp} = \text{vec}(\pi_{sdp})\text{vec}(\pi_{sdp})^\top$.  Subsequently, the choice of variables $\pi = \pi_{sdp}$ and $P = \text{vec}(\pi_{sdp})\text{vec}(\pi_{sdp})^\top$ is a feasible solution to \eqref{eq:gw++}.  This means \eqref{eq:gw++} attains the same objective value as the \eqref{eq:gw-dual}.  Recall from Theorem \ref{thm:gw_dual_GW++} that \eqref{eq:gw-dual} is the dual of \eqref{eq:gw++}, and hence in this instance the duality gap is indeed zero.
\end{proof}

We summarize the relationships among the original GW problem, \eqref{eq:gw-qcqp}, \eqref{eq:gw++}, \eqref{eq:gw-sdp-extra}, and \eqref{eq:gw-dual} in Figure \ref{fig:relationship}: 
\begin{itemize}
    \item \eqref{eq:gw-qcqp} is an equivalent reformulation of the original GW problem.
    \item \eqref{eq:gw++} is derived by introducing additional redundant constraints to \eqref{eq:gw-qcqp}. Consequently, \eqref{eq:gw-qcqp} and \eqref{eq:gw++} share the same optimal solutions. 
    \item \eqref{eq:gw-sdp-extra} is obtained by applying the standard SDP relaxation to \eqref{eq:gw-qcqp} and introducing supplementary constraints to tighten the relaxation.
    \item \eqref{eq:gw-dual} serves as the Lagrangian dual to both \eqref{eq:gw++} and \eqref{eq:gw-sdp-extra}, and strong duality establishes the equivalence between \eqref{eq:gw-dual} and \eqref{eq:gw-sdp-extra}.
\end{itemize} 

\begin{figure}[H]
    \centering
    \includegraphics[scale=0.8]{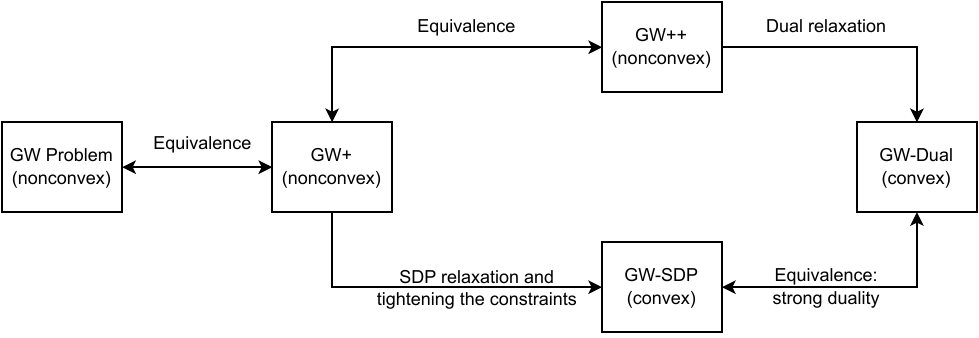}
    \caption{The relationship among the original GW problem, \eqref{eq:gw-qcqp}, \eqref{eq:gw++}, \eqref{eq:gw-sdp-extra} and \eqref{eq:gw-dual}. }
    \label{fig:relationship}
\end{figure}

\section{Proof of Results Concerning Duality}

We begin by defining these functions:
\begin{equation*}
\begin{aligned}
    H(\eta,\theta,Z) ~~ &:= ~~ L - Z  + \frac{1}{2} \sum_{i=1}^m (\eta^i a_i^\top + a_i {\eta^i}^\top) + \frac{1}{2} \sum_{j=1}^n (\theta^j b_j^\top + b_j {\theta^j}^\top) , \\
    g(\lambda,\mu,\eta,\theta,y) ~~ &:=~~ \sum_{i=1}^m \left( \lambda_i a_i + \alpha_i \eta^i \right) + \sum_{j=1}^n \left( \mu_j b_j + \beta_j \theta^j \right) + 2y.
\end{aligned}
\end{equation*}

\begin{proof}[Proof of \Cref{thm:gw_dual_gw_sdp}]
    [Deriving the dual program]: The dual function of the GW-SDP problem is given by
    \begin{equation*}
        \begin{aligned}
            \min_{P,\pi \geq 0} &~~ \tr{LP} - \tr{YP} - 2y^\top\text{vec}(\pi) - t\\
            & \quad + \sum_{i=1}^m \lambda_i(\alpha_i - a_i^\top \text{vec}(\pi)) + \sum_{j=1}^n \mu_j(\beta_j - b_j^\top \text{vec}(\pi))\\
            & \quad + \sum_{i=1}^m {\eta^i}^\top \left( Pa_i - \alpha_i \text{vec} (\pi) \right) + \sum_{j=1}^n {\theta^j}^\top \left( Pb_j - \beta_j \text{vec} (\pi) \right) \\
            & \quad - \tr{ZP} \\
            = ~~ \min_{P} & ~~ \mathrm{tr} \left( \left( H(\eta,\theta,Z) - Y\right) P \right)  ~~ + ~~  \min_{\pi\geq 0} ~~  \left\langle -g(\lambda,\mu,\eta,\theta,y) , \text{vec}(\pi) \right\rangle  \\
            & \quad + \lambda^\top \alpha + \mu^\top \beta - t. 
        \end{aligned}
    \end{equation*}
    In the above minimization over $P$, we observe that the objective evaluates to $-\infty$ if the following does not hold
\begin{equation*}
    H(\eta,\theta,Z) = Y.
\end{equation*}
    Similarly, in the minimization over $\pi \geq 0$, the objective evaluates to $-\infty$ if the following does not hold  
\begin{equation*}
    g(\lambda,\mu,\eta,\theta,y) \leq 0.
\end{equation*}
    We impose these as constraints, and we add the additional constraint that $Y \succeq 0$ on our dual variable, to obtain the form of the dual problem in \eqref{eq:gw-dual}.

        
    [Establishing zero duality gap]: Notice that \eqref{eq:gw-sdp-extra} and \eqref{eq:gw-dual} are convex programs.  Hence, to show strong duality, it suffices to check that Slater's condition hold; that is, there exists a strictly feasible solution.
    
    
    Consider $\eta^i = (|\lambda_{\rm min}(L)|+2)\mathbbm{1}$, $\lambda_i = -2m$ for $i \in [m]$, $\theta^j = 0$, $\mu_j = 0$ for $j \in [n]$, $t = 1$, $y = 0$, and
\begin{equation*}
    Z = (|\lambda_{\rm min}(L)|+2)\mathbbm{1}\mathbbm{1}^\top - (|\lambda_{\rm min}(L)|+1)I
\end{equation*}
    for some $\mathbbm{1}$, $0$ and $I$ of appropriate dimension. Then we have $Z > 0$, and
\begin{equation*}
    g(\lambda,\mu,\eta,\theta,y) = -2m\sum_{i=1}^m a_i + \sum_{i=1}^m \mathbbm{1} = -m \mathbbm{1} < 0. 
\end{equation*}
    Additionally, since $H(\eta,\theta,Z) = L + (|\lambda_{\rm min}(L)|+1)I \succ 0$, $t > 0$ and $y = 0$, it follows that the LHS of the first constraint in \eqref{eq:gw-dual} is positive definite. Therefore, we find a feasible solution of \eqref{eq:gw-dual} such that strict inequality holds for all inequality constraints. Strong duality then follows. 
\end{proof}

\begin{proof}[Proof of \Cref{thm:gw_dual_GW++}]
In addition to the dual variables \eqref{eq:dual_variables_b}-\eqref{eq:dual_variables_f}, we define these additional dual variables:
\begin{equation*}
    \begin{aligned}
    & Y \in \bbR^{mn \times mn}
    && : P = \text{vec} (\pi) \text{vec} (\pi)^\top \\
    & z \ge 0
    && : \text{vec} (\pi) \ge 0
    \end{aligned}
\end{equation*}

Then the dual function of \eqref{eq:gw++} is given by
        \begin{equation*}
        \begin{aligned}
            & \min_{\pi, P} \tr{LP} - \tr{Y(P-\text{vec} (\pi) \text{vec} (\pi)^\top)} + \sum_{i=1}^m \lambda_i(\alpha_i - a_i^\top \text{vec}(\pi)) + \sum_{j=1}^n \mu_j(\beta_j - b_j^\top \text{vec}(\pi))\\
            & \quad + \sum_{i=1}^m {\eta^i}^\top \left( Pa_i - \alpha_i \text{vec} (\pi) \right) + \sum_{j=1}^n {\theta^j}^\top \left( Pb_j - \beta_j \text{vec} (\pi) \right) - \tr{ZP} - z^\top \text{vec} (\pi) \\
            & = ~~ \min_{P} ~~ \underbrace{\tr{\left(H(\eta,\theta,Z) - Y\right)P}}_{A_1} \\
            & \quad + \min_{\pi} ~~ \underbrace{\text{vec}(\pi)^\top Y \text{vec}(\pi) - \left( \sum_{i=1}^m (\lambda_i a_i + \alpha_i \eta^i) + \sum_{j=1}^n (\mu_j b_j + \beta_j \theta^j) + z \right)^\top \text{vec}(\pi)}_{A_2}\\
            & \quad + \lambda^\top \alpha + \mu^\top \beta. 
        \end{aligned}
        \end{equation*}
    To simplify notation, we denote
\begin{equation*}
p := \sum_{i=1}^m (\lambda_i a_i + \alpha_i \eta^i) + \sum_{j=1}^n (\mu_j b_j + \beta_j \theta^j).
\end{equation*}
Observe that
        \begin{equation*}
            \min_{P\in \bbR^{m n \times m n}} A_1 = \begin{cases}
                0 & , \quad  \text{if $Y = H(\eta,\theta,Z)$} \\
                -\infty & , \quad \text{otherwise}
            \end{cases},
        \end{equation*}
        and
        \begin{equation*}
            \max_{\pi \in \bbR^{m \times n}} A_2 = \begin{cases}
                - \frac{1}{4}(p+z)^\top Y^{\dagger} (p+z) & , \quad \text{if $Y \succeq 0$ and $(I-YY^\dagger)(p+z)=0$} \\
                - \infty & , \quad \text{otherwise}
            \end{cases}
        \end{equation*}
    Hence, the dual of \eqref{eq:gw++} is given by
        \begin{equation*}
        \begin{aligned}
        \max_{\lambda,\mu,y,z,Z} \quad & \quad \lambda^\top \alpha + \mu^\top \beta - \frac{1}{4}(p+z)^\top Y^{\dagger} (p+z) \\
            \text{s.t.} \quad & \quad Y \succeq 0 \\
            & \quad (I - Y Y^\dagger)(p+z) = 0 \\
            & \quad Y = H(\eta,\theta,Z) \\
            & \quad Z \ge 0 \\
            & \quad z \ge 0
        \end{aligned}.
        \end{equation*}
    We re-write this as
    \begin{equation*}
    \begin{aligned}
    \max_{\lambda,\mu,y,z,Z,t} \quad & \quad \lambda^\top \alpha + \mu^\top \beta - t \\
            \text{s.t.} \quad & \quad \frac{1}{4}(p+z)^\top Y^\dagger (p+z) \le t \\
            & \quad Y \succeq 0 \\
            & \quad (I - Y Y^\dagger)(p+z)=0 \\
            & \quad Y = H(\eta,\theta,Z) \\
            & \quad Z \ge 0 \\
            & \quad z \ge 0
        \end{aligned}
        \end{equation*}
    By taking Schur complements and by replacing $Y$ with $H(\eta,\theta,Z)$, the above optimization instance reduces to
    \begin{equation*}
        \begin{aligned}
        \max_{\lambda,\mu,z,Z,t} \quad & \quad \lambda^\top \alpha + \mu^\top \beta - t\\
        \text{s.t.} \quad & \quad \begin{pmatrix}
            H(\eta,\theta,Z) & -\frac{1}{2}(p+z) \\ -\frac{1}{2}(p+z)^\top & t
        \end{pmatrix} \succeq 0 \\
        & \quad Z \ge 0 \\
        & \quad z \ge 0
        \end{aligned}
    \end{equation*}
    Note that $g(\lambda,\mu,\eta,\theta,y) = p + 2y$, the theorem then follows by doing a change of variable $y = -\frac{1}{2}(p+z)$. 
\end{proof}

\section{Extended Applications of GW-SDP}

\subsection{GW-SDP Barycenters}

One popular application of optimal transport is to compute the barycenters of
measures that serves as a building block for many learning methods. The notion
of barycenter for measures was first proposed in \cite{agueh2011barycenters}
for Wasserstein space. Akin to barycenter in Euclidean space (Fr\'echet), the
Wasserstein barycenter is defined as the solution of a weighted sum of OT
distances over the space of measures.
An efficient algorithm to compute the discrete OT barycenter with entropic
regularization was proposed in \cite{benamou2015iterative}, and was later
extended to discrete metric-measure spaces with entropic GW distance in
\cite{peyre2016gromov}.

We show that it is straightforward to extend the GW-SDP formulation to find
barycenters of a set of data as Fr\'echet means.
For simplicity, we assume that the base histogram $\bar\alpha$, the size of the
barycenters $m \in \bbN$, and $(\lambda_k)_k$ such that $\sum_k\lambda_k = 1$
are fixed.
We aim to find a structure matrix $\bar{C}$ that minimizes
\begin{equation}
  \label{eq:gw-sdp-barycenter}
  \min \sum_k \lambda_k \text{GW-SDP}(C_k, \bar{C}, \alpha_k, \bar{\alpha}).
\end{equation}
We have the following corollary.

\begin{corollary}[Adaptation of Proposition 3 in \cite{peyre2016gromov}]
  \label{corol:bary-update}
  In the special case of the squared loss $\ell(a,b) = (a - b)^2$, the solution of
  \eqref{eq:gw-sdp-barycenter} reads  
  \begin{equation}
    \label{eq:bary-update-l2}
    \bar{C} = \frac{\sum_k\lambda_k \pi^\top_{sdp, k} C_k \pi_{sdp, k}}{\alpha\alpha^\top},
  \end{equation}
    where $\pi_{sdp, k}$ is the solution to
  GW-SDP$(C_k,\bar{C}, \alpha_k, \bar{\alpha})$ and the division is entry-wise.
\end{corollary}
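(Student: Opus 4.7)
The plan is to adapt the proof of Proposition 3 in \cite{peyre2016gromov} to our setting. In a block-coordinate descent scheme for \eqref{eq:gw-sdp-barycenter}, holding the transport plans $\pi_{sdp,k}$ fixed, the update for $\bar{C}$ reduces to an unconstrained convex quadratic whose minimizer has the closed form given by \eqref{eq:bary-update-l2}.

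Concretely, for the squared loss $\ell(a,b) = (a-b)^2$, the first step is to expand
\begin{equation*}
\sum_k \lambda_k \langle \mathbf{L}(C_k, \bar{C}), \mathrm{vec}(\pi_{sdp,k})\mathrm{vec}(\pi_{sdp,k})^\top\rangle = \sum_k \lambda_k \sum_{i,i',j,j'} (C_{k,ii'} - \bar{C}_{jj'})^2\, \pi_{sdp,k,ij}\,\pi_{sdp,k,i'j'},
\end{equation*}
differentiate with respect to $\bar{C}_{jj'}$, and set the gradient to zero. This yields
\begin{equation*}
\bar{C}_{jj'} \sum_k \lambda_k \Bigl(\sum_i \pi_{sdp,k,ij}\Bigr)\Bigl(\sum_{i'}\pi_{sdp,k,i'j'}\Bigr) \;=\; \sum_k \lambda_k \sum_{i,i'} \pi_{sdp,k,ij}\, C_{k,ii'}\, \pi_{sdp,k,i'j'}.
\end{equation*}
Using the marginal constraints $\sum_i \pi_{sdp,k,ij} = \bar{\alpha}_j$ and $\sum_{i'}\pi_{sdp,k,i'j'} = \bar{\alpha}_{j'}$ together with $\sum_k \lambda_k = 1$, the coefficient of $\bar{C}_{jj'}$ on the left collapses to $\bar{\alpha}_j \bar{\alpha}_{j'}$, while the right-hand side is by definition $\sum_k \lambda_k (\pi_{sdp,k}^\top C_k \pi_{sdp,k})_{jj'}$. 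Dividing entrywise then recovers \eqref{eq:bary-update-l2}.

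The main subtlety to address is conceptual rather than computational: a strict block-coordinate step on the full SDP objective would hold the lifts $P_{sdp,k}$ fixed in addition to $\pi_{sdp,k}$, and the same first-order calculation (now using \eqref{eq:gw-margi} twice to obtain $\sum_{i,i'} P_{sdp,k,(i,j),(i',j')} = \bar{\alpha}_j \bar{\alpha}_{j'}$) would instead produce the numerator $\sum_{i,i'} C_{k,ii'}\, P_{sdp,k,(i,j),(i',j')}$. This coincides with $(\pi_{sdp,k}^\top C_k \pi_{sdp,k})_{jj'}$ exactly when $P_{sdp,k} = \mathrm{vec}(\pi_{sdp,k})\mathrm{vec}(\pi_{sdp,k})^\top$, i.e., under the rank-one tightness condition of the proposition following \Cref{thm:gw_dual_GW++}. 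Thus \eqref{eq:bary-update-l2} should be understood either as the exact barycenter update whenever the SDP is tight, or more generally as the natural block-coordinate update that substitutes the SDP plan $\pi_{sdp,k}$ for the Frank--Wolfe iterate in the original Peyré--Cuturi scheme.
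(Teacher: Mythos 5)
Your derivation is correct and matches the intended argument: the paper gives no proof of this corollary, deferring entirely to Proposition 3 of \cite{peyre2016gromov}, and your first-order computation (fix the plans, expand the squared loss, set the gradient in $\bar{C}_{jj'}$ to zero, and use $\sum_i \pi_{sdp,k,ij}=\bar{\alpha}_j$ together with $\sum_k\lambda_k=1$) is exactly that proof transplanted to the GW-SDP plans. Your closing observation is also a genuine clarification the paper glosses over: since the GW-SDP objective is $\langle L,P\rangle$ rather than a quadratic in $\pi$, a strict block-coordinate step on \eqref{eq:gw-sdp-barycenter} would, via \eqref{eq:gw-margi}, yield $\bar{C}_{jj'} = \sum_k\lambda_k\sum_{i,i'}C_{k,ii'}P_{sdp,k,(i,j),(i',j')}/(\bar{\alpha}_j\bar{\alpha}_{j'})$, which coincides with \eqref{eq:bary-update-l2} precisely when $P_{sdp,k}=\mathrm{vec}(\pi_{sdp,k})\mathrm{vec}(\pi_{sdp,k})^\top$ (the rank-one/tightness case); otherwise \eqref{eq:bary-update-l2} is the natural substitution of the SDP plan into the original update, which is how the paper's Algorithm uses it.
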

\Cref{corol:bary-update} shows that we may apply iterative updates to solve for the barycenter $\bar C$ via the Block Coordinate Descent (BCD) algorithm.
At each iteration, we solve $K$ independent instances of the GW-SDP problem to find $(\pi_{sdp, k})_k$, and then compute $\bar C$ using \eqref{eq:bary-update-l2} to solve for \eqref{eq:gw-sdp-barycenter}.
A pseudocode for the GW-SDP barycenter calculation is provided in \Cref{alg:bary}.
We demonstrate the effectiveness of the GW-SDP barycenter calculation by
applying it to find the barycenter of a graph dataset.
The dataset consists of 20 noisy graphs, created by adding random connections from a circular graph.  We show a visualization of 9 of these in \Cref{fig:viz-circular}.
The number of nodes ranges from 8-16.
We apply the \eqref{eq:gw-sdp-extra} barycenters update for 100 iterations, and
\Cref{fig:circular-bary} shows the result for a circular graph of 10 nodes.
\begin{algorithm}[h]
   \caption{Computation of GW-SDP barycenters.}
   \label{alg:bary}
\begin{algorithmic}
   \STATE {\bfseries Input:} dataset $\{C_k, \alpha_k\}_{k=1}^K$;
   $\{\lambda_k\}_{k=1}^K$.
   \STATE Initialize $\bar C$.   
   \REPEAT
   \FOR{$k=1$ {\bfseries to} $K$}
   \STATE $\pi_{sdp, k} \leftarrow$ \texttt{solve\_GW-SDP}$(C_k, \bar{C}, \alpha_k, \bar{\alpha})$.
   \ENDFOR
   \STATE Update $\bar{C}$ using \Cref{eq:bary-update-l2}.
   \UNTIL convergence
\end{algorithmic}
\end{algorithm}

\begin{figure}[h]
  \centering
  \begin{subfigure}[c]{0.5\columnwidth}
    \centering
    \includegraphics[width=0.8\textwidth]{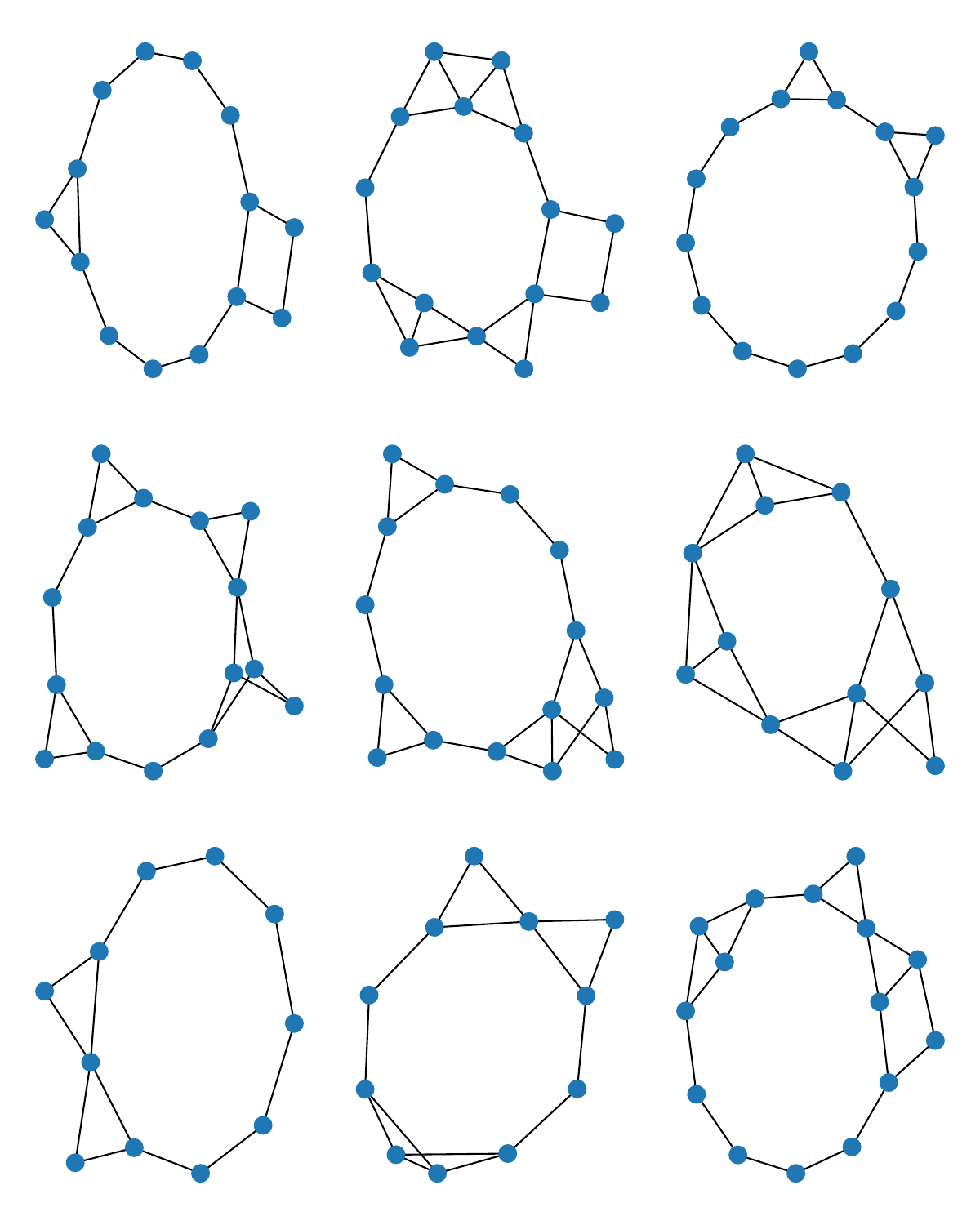}
    \caption{Visualization of noisy circular graphs.}
    \label{fig:viz-circular}    
  \end{subfigure}
  \begin{subfigure}[c]{0.39\columnwidth}
    \centering
    \includegraphics[width=\textwidth]{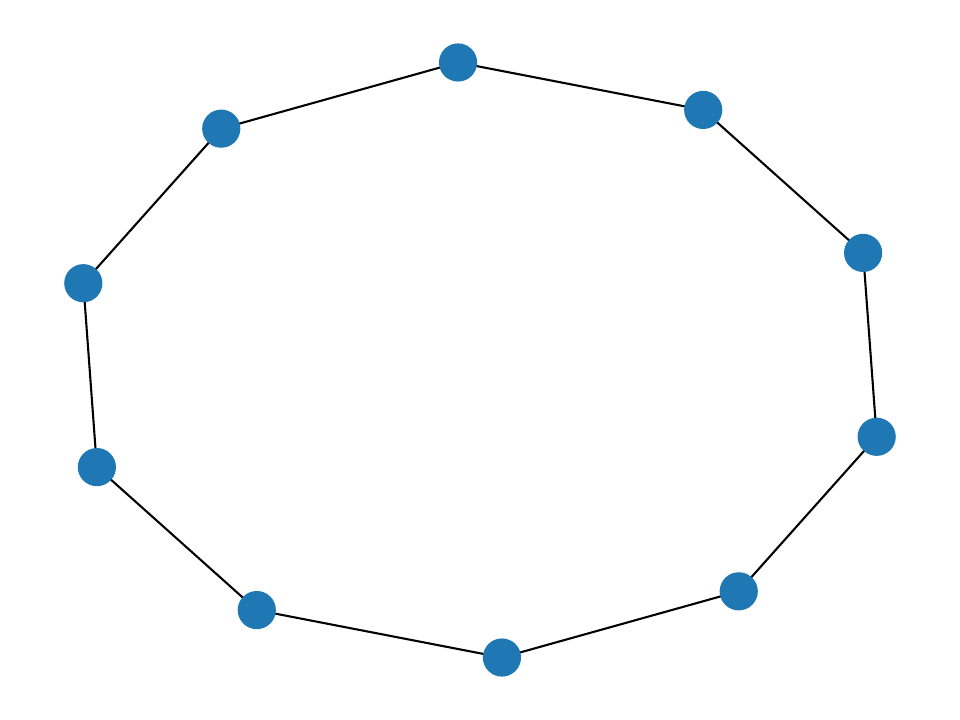}
    \caption{Graph barycenter learned by \eqref{eq:gw-sdp-extra}.}
    \label{fig:circular-bary}
  \end{subfigure}  
  \caption{Application of the \eqref{eq:gw-sdp-extra} to find graph barycenter of noisy circular
  graphs.}
  \label{fig:barycenter}  
\end{figure}

\subsection{Outlier-Robust GW-SDP}

It is generally possible to extend the semidefinite relaxation to variants of the GW problem. We briefly describe the semidefinite relaxation to the outlier-robust GW problem by \cite{kong2024outlier}. Here, $(X,d_{X})$ and $(Y,d_{Y})$ are two metric spaces with accompanying measures $\mu$ and $\nu$. The distance between $\mu$ and $\nu$ is 

\begin{center}
    $\min ~~ \langle L, P \rangle + \tau_1 d_{KL} (\pi 1,\alpha) + \tau_2 d_{KL} (\pi^T 1,\beta)$

$\mathrm{s.t.} \left( \begin{array}{cc} P & \mathrm{vec}(\pi)^T \\\ \mathrm{vec}(\pi) & 1 \end{array} \right) \succeq 0 $

$\qquad \sum_{i} P_{(i,j),(k,l)} = f_{j}^{k,l}$, $\Sigma_{j} P_{(i,j),(k,l)} = g_i^{k,l}$

$\qquad  P \geq 0 $

$\qquad  d_{KL} (\mu,\alpha) \leq \rho_1, d_{KL} (\nu,\beta) \leq \rho_2.$
\end{center}

There is one technical aspect: In the marginal sums $\sum_{i} P_{(i,j),(k,l)}$ we set this equal to some constant $f_{j}^{k,l}$. In GW-SDP, the corresponding RHS term depends on $\pi$ and $\alpha$. In the robust set-up, $\alpha$ is an optimization variable, not a constant, which necessitates the above change. We remark that the resulting formulation is convex but not an SDP because of the presence of the KL divergence.

\newpage
\newpage

\section*{NeurIPS Paper Checklist}

\begin{enumerate}

\item {\bf Claims}
    \item[] Question: Do the main claims made in the abstract and introduction accurately reflect the paper's contributions and scope?
    \item[] Answer: \answerYes{} 
    \item[] Justification: The abstract highlights the proposed approach and advantages of solving the GW distance problem via a SDP relaxation and talks about the use of numerical algorithms to solve the problem.
    \item[] Guidelines:
    \begin{itemize}
        \item The answer NA means that the abstract and introduction do not include the claims made in the paper.
        \item The abstract and/or introduction should clearly state the claims made, including the contributions made in the paper and important assumptions and limitations. A No or NA answer to this question will not be perceived well by the reviewers. 
        \item The claims made should match theoretical and experimental results, and reflect how much the results can be expected to generalize to other settings. 
        \item It is fine to include aspirational goals as motivation as long as it is clear that these goals are not attained by the paper. 
    \end{itemize}

\item {\bf Limitations}
    \item[] Question: Does the paper discuss the limitations of the work performed by the authors?
    \item[] Answer: \answerYes{} 
    \item[] Justification:  The main limitation of the proposed GW-SDP formulation is the runtime.  In the conclusion, we discuss possible future directions towards mitigating these issues.
    \item[] Guidelines:
    \begin{itemize}
        \item The answer NA means that the paper has no limitation while the answer No means that the paper has limitations, but those are not discussed in the paper. 
        \item The authors are encouraged to create a separate "Limitations" section in their paper.
        \item The paper should point out any strong assumptions and how robust the results are to violations of these assumptions (e.g., independence assumptions, noiseless settings, model well-specification, asymptotic approximations only holding locally). The authors should reflect on how these assumptions might be violated in practice and what the implications would be.
        \item The authors should reflect on the scope of the claims made, e.g., if the approach was only tested on a few datasets or with a few runs. In general, empirical results often depend on implicit assumptions, which should be articulated.
        \item The authors should reflect on the factors that influence the performance of the approach. For example, a facial recognition algorithm may perform poorly when image resolution is low or images are taken in low lighting. Or a speech-to-text system might not be used reliably to provide closed captions for online lectures because it fails to handle technical jargon.
        \item The authors should discuss the computational efficiency of the proposed algorithms and how they scale with dataset size.
        \item If applicable, the authors should discuss possible limitations of their approach to address problems of privacy and fairness.
        \item While the authors might fear that complete honesty about limitations might be used by reviewers as grounds for rejection, a worse outcome might be that reviewers discover limitations that aren't acknowledged in the paper. The authors should use their best judgment and recognize that individual actions in favor of transparency play an important role in developing norms that preserve the integrity of the community. Reviewers will be specifically instructed to not penalize honesty concerning limitations.
    \end{itemize}

\item {\bf Theory Assumptions and Proofs}
    \item[] Question: For each theoretical result, does the paper provide the full set of assumptions and a complete (and correct) proof?
    \item[] Answer: \answerYes{} 
    \item[] Justification: The feasibility and basis of the formulation of the SDP relaxation is shown. The validity of the numerical algorithms is also shown.
    \item[] Guidelines:
    \begin{itemize}
        \item The answer NA means that the paper does not include theoretical results. 
        \item All the theorems, formulas, and proofs in the paper should be numbered and cross-referenced.
        \item All assumptions should be clearly stated or referenced in the statement of any theorems.
        \item The proofs can either appear in the main paper or the supplemental material, but if they appear in the supplemental material, the authors are encouraged to provide a short proof sketch to provide intuition. 
        \item Inversely, any informal proof provided in the core of the paper should be complemented by formal proofs provided in appendix or supplemental material.
        \item Theorems and Lemmas that the proof relies upon should be properly referenced. 
    \end{itemize}

    \item {\bf Experimental Result Reproducibility}
    \item[] Question: Does the paper fully disclose all the information needed to reproduce the main experimental results of the paper to the extent that it affects the main claims and/or conclusions of the paper (regardless of whether the code and data are provided or not)?
    \item[] Answer: \answerYes{} 
    \item[] Justification: The formulation of GW-SDP is compatible with existing convex optimization solvers. The equations and steps taken for the numerical algorithms are provided.
    \item[] Guidelines:
    \begin{itemize}
        \item The answer NA means that the paper does not include experiments.
        \item If the paper includes experiments, a No answer to this question will not be perceived well by the reviewers: Making the paper reproducible is important, regardless of whether the code and data are provided or not.
        \item If the contribution is a dataset and/or model, the authors should describe the steps taken to make their results reproducible or verifiable. 
        \item Depending on the contribution, reproducibility can be accomplished in various ways. For example, if the contribution is a novel architecture, describing the architecture fully might suffice, or if the contribution is a specific model and empirical evaluation, it may be necessary to either make it possible for others to replicate the model with the same dataset, or provide access to the model. In general. releasing code and data is often one good way to accomplish this, but reproducibility can also be provided via detailed instructions for how to replicate the results, access to a hosted model (e.g., in the case of a large language model), releasing of a model checkpoint, or other means that are appropriate to the research performed.
        \item While NeurIPS does not require releasing code, the conference does require all submissions to provide some reasonable avenue for reproducibility, which may depend on the nature of the contribution. For example
        \begin{enumerate}
            \item If the contribution is primarily a new algorithm, the paper should make it clear how to reproduce that algorithm.
            \item If the contribution is primarily a new model architecture, the paper should describe the architecture clearly and fully.
            \item If the contribution is a new model (e.g., a large language model), then there should either be a way to access this model for reproducing the results or a way to reproduce the model (e.g., with an open-source dataset or instructions for how to construct the dataset).
            \item We recognize that reproducibility may be tricky in some cases, in which case authors are welcome to describe the particular way they provide for reproducibility. In the case of closed-source models, it may be that access to the model is limited in some way (e.g., to registered users), but it should be possible for other researchers to have some path to reproducing or verifying the results.
        \end{enumerate}
    \end{itemize}

\item {\bf Open access to data and code}
    \item[] Question: Does the paper provide open access to the data and code, with sufficient instructions to faithfully reproduce the main experimental results, as described in supplemental material?
    \item[] Answer: \answerYes{} 
    \item[] Justification: The code is made publicly available at \url{https://github.com/tbng/gwsdp}.
    \item[] Guidelines:
    \begin{itemize}
        \item The answer NA means that paper does not include experiments requiring code.
        \item Please see the NeurIPS code and data submission guidelines (\url{https://nips.cc/public/guides/CodeSubmissionPolicy}) for more details.
        \item While we encourage the release of code and data, we understand that this might not be possible, so “No” is an acceptable answer. Papers cannot be rejected simply for not including code, unless this is central to the contribution (e.g., for a new open-source benchmark).
        \item The instructions should contain the exact command and environment needed to run to reproduce the results. See the NeurIPS code and data submission guidelines (\url{https://nips.cc/public/guides/CodeSubmissionPolicy}) for more details.
        \item The authors should provide instructions on data access and preparation, including how to access the raw data, preprocessed data, intermediate data, and generated data, etc.
        \item The authors should provide scripts to reproduce all experimental results for the new proposed method and baselines. If only a subset of experiments are reproducible, they should state which ones are omitted from the script and why.
        \item At submission time, to preserve anonymity, the authors should release anonymized versions (if applicable).
        \item Providing as much information as possible in supplemental material (appended to the paper) is recommended, but including URLs to data and code is permitted.
    \end{itemize}

\item {\bf Experimental Setting/Details}
    \item[] Question: Does the paper specify all the training and test details (e.g., data splits, hyperparameters, how they were chosen, type of optimizer, etc.) necessary to understand the results?
    \item[] Answer: \answerNA{} 
    \item[] Justification: The main contribution is algorithmic.  There is no testing or training aspect to this problem.  The implementation of the algorithm in Section \ref{sec:heuristic-solver} does involve certain parameter tuning.  We do not discuss these parameters but these will be specified in code that will be made public. 
    \item[] Guidelines:
    \begin{itemize}
        \item The answer NA means that the paper does not include experiments.
        \item The experimental setting should be presented in the core of the paper to a level of detail that is necessary to appreciate the results and make sense of them.
        \item The full details can be provided either with the code, in appendix, or as supplemental material.
    \end{itemize}

\item {\bf Experiment Statistical Significance}
    \item[] Question: Does the paper report error bars suitably and correctly defined or other appropriate information about the statistical significance of the experiments?
    \item[] Answer: \answerNA{} 
    \item[] Justification: The main contribution of this paper is algorithmic.  There are no numerical experiments of a statistical nature in this paper.  All numerical experiments concern algorithmic performance. 
    \item[] Guidelines:
    \begin{itemize}
        \item The answer NA means that the paper does not include experiments.
        \item The authors should answer "Yes" if the results are accompanied by error bars, confidence intervals, or statistical significance tests, at least for the experiments that support the main claims of the paper.
        \item The factors of variability that the error bars are capturing should be clearly stated (for example, train/test split, initialization, random drawing of some parameter, or overall run with given experimental conditions).
        \item The method for calculating the error bars should be explained (closed form formula, call to a library function, bootstrap, etc.)
        \item The assumptions made should be given (e.g., Normally distributed errors).
        \item It should be clear whether the error bar is the standard deviation or the standard error of the mean.
        \item It is OK to report 1-sigma error bars, but one should state it. The authors should preferably report a 2-sigma error bar than state that they have a 96\% CI, if the hypothesis of Normality of errors is not verified.
        \item For asymmetric distributions, the authors should be careful not to show in tables or figures symmetric error bars that would yield results that are out of range (e.g. negative error rates).
        \item If error bars are reported in tables or plots, The authors should explain in the text how they were calculated and reference the corresponding figures or tables in the text.
    \end{itemize}

\item {\bf Experiments Compute Resources}
    \item[] Question: For each experiment, does the paper provide sufficient information on the computer resources (type of compute workers, memory, time of execution) needed to reproduce the experiments?
    \item[] Answer: \answerYes{} 
    \item[] Justification: The details of the computer resources used for the experiments are given in section 6.1.
    \item[] Guidelines:
    \begin{itemize}
        \item The answer NA means that the paper does not include experiments.
        \item The paper should indicate the type of compute workers CPU or GPU, internal cluster, or cloud provider, including relevant memory and storage.
        \item The paper should provide the amount of compute required for each of the individual experimental runs as well as estimate the total compute. 
        \item The paper should disclose whether the full research project required more compute than the experiments reported in the paper (e.g., preliminary or failed experiments that didn't make it into the paper). 
    \end{itemize}
    
\item {\bf Code Of Ethics}
    \item[] Question: Does the research conducted in the paper conform, in every respect, with the NeurIPS Code of Ethics \url{https://neurips.cc/public/EthicsGuidelines}?
    \item[] Answer: \answerYes{} 
    \item[] Justification: The research does not involve human subjects or private data. The libraries used for comparison against existing methodologies are open source.
    \item[] Guidelines:
    \begin{itemize}
        \item The answer NA means that the authors have not reviewed the NeurIPS Code of Ethics.
        \item If the authors answer No, they should explain the special circumstances that require a deviation from the Code of Ethics.
        \item The authors should make sure to preserve anonymity (e.g., if there is a special consideration due to laws or regulations in their jurisdiction).
    \end{itemize}

\item {\bf Broader Impacts}
    \item[] Question: Does the paper discuss both potential positive societal impacts and negative societal impacts of the work performed?
    \item[] Answer: \answerNA{} 
    \item[] Justification: The paper's focus is on improvements towards solving the GW distance problem and is foundational in nature.  The societal impact is limited insofar as it improves existing ML techniques that are based on Optimal Transportation techniques.
    \item[] Guidelines:
    \begin{itemize}
        \item The answer NA means that there is no societal impact of the work performed.
        \item If the authors answer NA or No, they should explain why their work has no societal impact or why the paper does not address societal impact.
        \item Examples of negative societal impacts include potential malicious or unintended uses (e.g., disinformation, generating fake profiles, surveillance), fairness considerations (e.g., deployment of technologies that could make decisions that unfairly impact specific groups), privacy considerations, and security considerations.
        \item The conference expects that many papers will be foundational research and not tied to particular applications, let alone deployments. However, if there is a direct path to any negative applications, the authors should point it out. For example, it is legitimate to point out that an improvement in the quality of generative models could be used to generate deepfakes for disinformation. On the other hand, it is not needed to point out that a generic algorithm for optimizing neural networks could enable people to train models that generate Deepfakes faster.
        \item The authors should consider possible harms that could arise when the technology is being used as intended and functioning correctly, harms that could arise when the technology is being used as intended but gives incorrect results, and harms following from (intentional or unintentional) misuse of the technology.
        \item If there are negative societal impacts, the authors could also discuss possible mitigation strategies (e.g., gated release of models, providing defenses in addition to attacks, mechanisms for monitoring misuse, mechanisms to monitor how a system learns from feedback over time, improving the efficiency and accessibility of ML).
    \end{itemize}
    
\item {\bf Safeguards}
    \item[] Question: Does the paper describe safeguards that have been put in place for responsible release of data or models that have a high risk for misuse (e.g., pretrained language models, image generators, or scraped datasets)?
    \item[] Answer: \answerNA{} 
    \item[] Justification: There is no risk of misuse for the algorithms in the research. There is no release of data or models.
    \item[] Guidelines:
    \begin{itemize}
        \item The answer NA means that the paper poses no such risks.
        \item Released models that have a high risk for misuse or dual-use should be released with necessary safeguards to allow for controlled use of the model, for example by requiring that users adhere to usage guidelines or restrictions to access the model or implementing safety filters. 
        \item Datasets that have been scraped from the Internet could pose safety risks. The authors should describe how they avoided releasing unsafe images.
        \item We recognize that providing effective safeguards is challenging, and many papers do not require this, but we encourage authors to take this into account and make a best faith effort.
    \end{itemize}

\item {\bf Licenses for existing assets}
    \item[] Question: Are the creators or original owners of assets (e.g., code, data, models), used in the paper, properly credited and are the license and terms of use explicitly mentioned and properly respected?
    \item[] Answer: \answerYes{} 
    \item[] Justification: The open source  Python Optimal Transport package, released under the MIT license, is only used for algorithmic comparisons.  Other datasets that using in the numerical experiment sections are also cited.
    \item[] Guidelines:
    \begin{itemize}
        \item The answer NA means that the paper does not use existing assets.
        \item The authors should cite the original paper that produced the code package or dataset.
        \item The authors should state which version of the asset is used and, if possible, include a URL.
        \item The name of the license (e.g., CC-BY 4.0) should be included for each asset.
        \item For scraped data from a particular source (e.g., website), the copyright and terms of service of that source should be provided.
        \item If assets are released, the license, copyright information, and terms of use in the package should be provided. For popular datasets, \url{paperswithcode.com/datasets} has curated licenses for some datasets. Their licensing guide can help determine the license of a dataset.
        \item For existing datasets that are re-packaged, both the original license and the license of the derived asset (if it has changed) should be provided.
        \item If this information is not available online, the authors are encouraged to reach out to the asset's creators.
    \end{itemize}

\item {\bf New Assets}
    \item[] Question: Are new assets introduced in the paper well documented and is the documentation provided alongside the assets?
    \item[] Answer: \answerNA{} 
    \item[] Justification: The paper does not released new assets.
    \item[] Guidelines: 
    \begin{itemize}
        \item The answer NA means that the paper does not release new assets.
        \item Researchers should communicate the details of the dataset/code/model as part of their submissions via structured templates. This includes details about training, license, limitations, etc. 
        \item The paper should discuss whether and how consent was obtained from people whose asset is used.
        \item At submission time, remember to anonymize your assets (if applicable). You can either create an anonymized URL or include an anonymized zip file.
    \end{itemize}

\item {\bf Crowdsourcing and Research with Human Subjects}
    \item[] Question: For crowdsourcing experiments and research with human subjects, does the paper include the full text of instructions given to participants and screenshots, if applicable, as well as details about compensation (if any)? 
    \item[] Answer: \answerNA{} 
    \item[] Justification: The paper does not involve crowdsourcing nor research with human subjects.
    \item[] Guidelines:
    \begin{itemize}
        \item The answer NA means that the paper does not involve crowdsourcing nor research with human subjects.
        \item Including this information in the supplemental material is fine, but if the main contribution of the paper involves human subjects, then as much detail as possible should be included in the main paper. 
        \item According to the NeurIPS Code of Ethics, workers involved in data collection, curation, or other labor should be paid at least the minimum wage in the country of the data collector. 
    \end{itemize}

\item {\bf Institutional Review Board (IRB) Approvals or Equivalent for Research with Human Subjects}
    \item[] Question: Does the paper describe potential risks incurred by study participants, whether such risks were disclosed to the subjects, and whether Institutional Review Board (IRB) approvals (or an equivalent approval/review based on the requirements of your country or institution) were obtained?
    \item[] Answer: \answerNA{} 
    \item[] Justification: The paper does not involve crowdsourcing nor research with human subjects.
    \item[] Guidelines:
    \begin{itemize}
        \item The answer NA means that the paper does not involve crowdsourcing nor research with human subjects.
        \item Depending on the country in which research is conducted, IRB approval (or equivalent) may be required for any human subjects research. If you obtained IRB approval, you should clearly state this in the paper. 
        \item We recognize that the procedures for this may vary significantly between institutions and locations, and we expect authors to adhere to the NeurIPS Code of Ethics and the guidelines for their institution. 
        \item For initial submissions, do not include any information that would break anonymity (if applicable), such as the institution conducting the review.
    \end{itemize}

\end{enumerate}

\end{document}